\newtheorem{Theorem}{Theorem}[section]
\newtheorem{Definition}[Theorem]{Definition}
\newtheorem{Lemma}[Theorem]{Lemma}
\newtheorem{Corollary}[Theorem]{Corollary}
\newtheorem{Remark}[Theorem]{Remark}
\newtheorem*{acknowledgements}{Acknowledgements}
\def\C{\mathbb C}
\def\N{\mathbb N}
\newcommand{\CA}{\mathcal{A}}
\newcommand{\CB}{\mathcal{B}}
\newcommand{\CD}{\mathcal{D}}
\newcommand{\CF}{\mathcal{F}}
\newcommand{\CH}{\mathcal{H}}
\newcommand{\CJ}{\mathcal{J}}
\newcommand{\CK}{\mathcal{K}}
\newcommand{\CL}{\mathcal{JL}}
\newcommand{\CP}{\mathcal{P}}
\newcommand{\CS}{\mathcal{S}}
\def\be{\begin{equation}}
\def\ee{\end{equation}}
\def\bt{\begin{Theorem}}
\def\et{\end{Theorem}}
\def\bi{\begin{itemize}}
\def\ei{\end{itemize}}
\def\bea{\begin{eqnarray}}
\def\eea{\end{eqnarray}}
\def\beast{\begin{eqnarray*}}
\def\eeast{\end{eqnarray*}}
\def\ben{\begin{enumerate}}
\def\een{\end{enumerate}}
\def\half{\frac{1}{2}}
\def\bi{\bibitem}
\def\rar{\rightarrow}
\def\e{{\epsilon}}
\begin{document}
\title{CAR flows on type $III$ factors and its extendability
}
\author{Panchugopal Bikram}
%\date{\today}
\address{Institue of mathematical sciences, Chennai, India}
\email{pg.math@gmail.com}

\keywords{$*$-endomorphisms, $E_0-$semigroups, equi-modular, factors, noncommutative probability, supper product systems, CCR flows
CAR flows.}

\subjclass[2010]{Primary  46L55; Secondary 46L40, 46L53, 46C99.}
\thanks{ }

\begin{abstract}
In this paper 
using one of the 
necessary conditions obtained for extendability in \cite{BISS}, we prove that  
the CAR flows (\cite{G}) on type $III$
factors arising from most quasi-free states are not extendable.
As a consequence we find the super product system of CAR flows.
We know from \cite{Arv} that CCR flows and CAR flows on 
type $I$ factors with the same Arveson index are cocycle
conjugate. But our result  
together with \cite{BISS} will show that CCR flows and 
CAR flows on type $III$ factors are not cocycle conjugate. 
\end{abstract} 
\maketitle \tableofcontents

\medskip
\noindent

\section{Introduction}
A weak-* continuous semigroup of of unital $*$-endomorphisms on a von Neumann 
algebra is called an $E_0$-semigroup. $E_0$-semigroups on type $I$ factors
have receivded much atention (see the monograph \cite{Arv} for an
extensive reference). 
The study of $E_0$-semigroups on type $II_1$ factors was initiated by
Powers in 1998 (see \cite{Pr}). There was little progress on
$E_0$-semigroups on type $II_1$ factors until the results independently obtained recently in \cite{Alev} and  \cite{MS}).

On the other hand, $E_0$-semigroups  on type $III$ factors had not received too much attention. 

In 2001, G.G. Amosov, A.V Bulinski and Shirkov initiated a study of $E_0$-semigroups on 
arbitrary factors (see \cite{ABS}). They were interested in a special kind of $E_0$-semigroups 
which they called ``regular $E_0$-semigroups'', which are those that can be extended to type $I$ 
factors in a canonical 
way. Then in 2013, in \cite{BISS}, we studied a certain class of endomorphisms
and $E_0$-semigroups on arbitrary factors which we call extendable. 
Unfortunately  \cite{ABS} has some errors: for example they claim
in section 5 of their paper,
that CAR flows arising from quasi-free state of $\frac{1}{2}$ are regular semigroups. 
We proved in \cite{BISS}  that these CAR flows are not regular semigroups
(although we refer to them as ``extendable'' $E_0$-semigroups).

Section 2 is the preliminary section. Where we will briefly discuss the meaning of extendability 
of an $E_0$-semigroup. We also mention the definition of supper product system and then we write some results
regarding the supper product system.

In section 3 we study CAR flows on type $III$
factors and prove that they are not extendable. Also we point out an
error in \cite{ABS} and we find out the supper product system of CAR flows.

In section 4 we study the relations between CCR  and CAR flows on type
$III$ factors and then we distinguish them.

\section{Preliminaries}
\subsection{Extendable $E_0$-semigroup}
We begin by recalling some facts from \cite{BISS} that will be used often in the sequel. Assume that
$\phi$ is a faithful normal state on a factor $M$.
Let $\lambda_M$ be the left regular representation of $M$  onto $\CB(L^2(M,\phi))$.
Identify $x \in M$ with $\lambda_M(x)$. 
For the modular conjugation
operator, we simply write $\CJ$  for  ($\CJ_\phi$).
Thanks to the Tomita-Takesaki theorem, we know that
\begin{itemize}
 \item $\jmath := \CJ (\cdot)\CJ$ is a *-preserving conjugate-linear 
isomorphism of $\CB(L^2(M,\phi))$ onto itself, which maps $M$ and
$M^\prime$ onto one another, and that
 \item $\widehat{1_M}$ is a cyclic and separating vector for
$M^\prime$.
\end{itemize}

 We assume that $\theta$ is a normal unital 
*-endomorphism, which preserves $\phi$. The invariance assumption  
$\phi \circ \theta = \phi$ implies that there exists a unique isometry
$u_\theta$ on $L^2(M,\phi)$ such that $u_\theta x \widehat{1_M} = \theta(x)
\widehat{1_M}$, which in turn implies that $u_\theta x = \theta(x) u_\theta
~\forall x \in M$. Recall the following definition from \cite{BISS}.  
\begin{Definition}\label{equi-modular}
If $M,\phi, \theta$ are as above, and if the associated isometry $u_\theta$ of $L^2(M,\phi)$ commutes with 
the modular conjugation operator $\CJ (=\CJ_\phi)$, then
$\theta$ is called an {\bf equi-modular} endomorphism of the  
factorial non-commutative probability space $(M, \phi)$. 
\end{Definition}
Suppose $\theta$ is an equi-modular endomorphism of a factorial non-commutative probability space $(M, \phi)$.  Then 
the equation $\theta^\prime (x^\prime) = \CJ \theta(\CJ x^\prime \CJ )\CJ$
 defines a unital normal *-endomorphism of $M^\prime$, which preserves the state given by
$\phi^\prime(x^\prime) = \overline{\phi(\CJ x^\prime  \CJ)}$; and we have the identifications
$ L^2(M',\theta ') = L^2(M,\phi)$,
$\widehat{1_{M'}} = \widehat{1_M}$ and 
$u_{\theta '}  = u_\theta $ (for details see \cite{BISS}).
For convenience of reference, we include this definition from \cite{BISS} .

\begin{Definition}
Let $\theta$ be an equi-modular endomorphism of a factorial
non-commutative probability space $(M, \phi)$ in standard
form (i.e., viewed as embedded in $\CB(L^2(M,\phi))$ as above).
Then $\theta$ is called {\bf extendable } if 
 there exists a unital normal $*$-endomorphism $\theta^{(2)}$ of
  $\CB(L^2(M,\phi))$ such that $\theta^{(2)}(x) = \theta(x)$ and
  $\theta^{(2)}(j(x)) = j(\theta(x))$ for all $x \in M$. 

\end{Definition}

\begin{Definition}
$\{\alpha_t: t \geq 0\}$ is said to be an $E_0$-semigroup on a von
Neumann probability space $(M,\phi)$, if
\ben
\item $\alpha_t$ is a $\phi$-preserving normal unital *-homomorphism
  of $M$ for each $t \geq 0$;
\item $\alpha_t \circ \alpha_s = \alpha_{t+s} ~\forall s.t 'geq 0$;
\item $\alpha_0 = id_M$; and
\item $[0,\infty) \ni t \mapsto \rho(\alpha_t(x))$ is continuous for
  each $x \in M, \rho \in M_*$.
\een
It is called extendable $E_0$-semigroup if fol all $t\geq 0$, $\alpha_t$ is 
extendable.
\end{Definition}

\subsection{Super product system}

The notion of super product system is already introduced in 
\cite{MS}. It is a generalization of the
product systems introduced by Arveson, and may help to 
analyse $E_0$-semigroups on non-type $I$ factors (\cite{MS}). 
\begin{Definition}
 A super product system of Hilbert spaces is a one
parameter family of separable Hilbert spaces $\{H_t : t \geq 0 \}$, together
with isometries
\[
 U_{s,t} : H_s \otimes H_t \mapsto H_{s+t},
\]
for $s, t \in (0, \infty)$,
which satisfy the 
following requirements of associativity and measurability:\\
$(i)$ (Associativity) For any $s_1 , s_2 , s_3 \in (0, \infty)$
\[
U_{s_1 ,s_2 + s_3} (1_{H_{s_1}} \otimes U_{s_2 , s_3 }) = U_{s_1 +s_2 ,s_3}  (U_{s_1 ,s_2} \otimes 1_{H_{s_3}}).
\]
$(ii)$ (Measurability) The space $H = \{(t, \xi_t ) : t ∈ (0, \infty), \xi_t \in H_t \}$ is
equipped with a structure of standard Borel space that is compatible
with the projection $p : H \mapsto (0, \infty)$, given by $p((t, \xi_t) = t$.
\end{Definition}
Given an equi-modular $E_0$-semigroup $\alpha = \{\alpha_t: t \geq 0\}$ on a factorial non-commutative 
probability space $(M,\phi)$, we can always associate a super product system corresponding to the 
$E_0$-semigroup $\alpha$.
Assume $M$ is
acting standardly on $ \CH = L ^{2} ( M, \phi )$.
We consider, for every $t > 0$, the interwiner space
\be\label{Ealpha}
E^{\alpha_t} = \{ T\in \CB(L^2(M,\phi)) :  \alpha_t(x)T = Tx, \forall x \in M \}. 
\ee
$
\alpha^\prime = \{ \alpha^\prime_t = \jmath \circ \alpha_{t} \circ \jmath
\text{ : } t \geq 0 \} $
 defines an $E_0$-semigroup on the commutant $M^{\prime}$;
and similarly we have $E^{\alpha^{\prime}_t}$.

We first focus on the `fundamental unit' $\{u_t: t \ge 0\}$ - which 
will establish the fact that $E^{\alpha_t} \cap E^{\alpha^\prime_t} 
\neq \emptyset ~\forall t \geq 0$.  For $t \geq 0$, the fact that `$\phi$' 
is preserved by $\alpha_t$ implies the existence of a unique family 
(necessarily a one-parameter semigroup) $\{u_t: t \geq 0\}$ of 
isometries on $L^2(M)$ such that $u_t x \hat{1} = \alpha_t(x) \hat{1} ~\forall x \in M$,
 and consequently $u_t \in E^{\alpha_t}$.
As $\alpha_t$ is a equi-modular *-endomorphism of $M$, it follows - see
\cite{BISS} - that $u_t$ also `implements $\alpha'_t$, i.e., 
also $u_t x^\prime \hat{1} = \alpha_t^\prime(x^\prime) \hat{1} ~\forall x^\prime \in M^\prime$, 
and consequently that  $u_t \in E^{\alpha^\prime_t} $. Thus,
\be \label{nonnull}
u_t \in E^{\alpha_t} \cap E^{\alpha^\prime_t} ~\forall t \geq 0.
\ee

Now for every $t > 0 $, let us write $H(t) = E^{\alpha_t}\cap E^{\alpha^\prime_t}$.
In fact, $H(t)$ is actually a Hilbert space; if  $ S , T \in H(t) $,
then
\[T^*S \in (E^{\alpha_t})^* E^{\alpha_t} \cap
(E^{\alpha^\prime_t})^*E^{\alpha^\prime_t} \subset M^\prime \cap M = \C\]
and we find that  $T^*S$ is a scalar multiple of the identity and the value of 
that scalar defines an inner product by way of 
\[
 T^*S = \langle S , T \rangle  I . 
\]

Now $H = \{ (t , \xi_t ) : \xi_t \in H_t \}$ is a
super product  system with the family of isometries 
\[
 U_{s,t} :H_s\otimes  H_t \mapsto H_{s+t},
\]
uniquely determined by $U_{s,t} ( S\otimes T) = ST$, 
for $S\in H_s , T \in H_t $.

We collect the following explicit description of these intertwiner spaces which will 
useful in the sequel.

\begin{Theorem}\label{bimod}
$E^{\alpha_t}= [ M^\prime u_t ] = {\alpha_t(M)}^\prime u_t $.
\end{Theorem}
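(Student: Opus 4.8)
The plan is to prove the two equalities in turn, treating $E^{\alpha_t}=\alpha_t(M)'u_t$ as the substantive one and then deducing $[M'u_t]=\alpha_t(M)'u_t$ by approximation. \textbf{For $E^{\alpha_t}=\alpha_t(M)'u_t$:} the inclusion $\supseteq$ is a one-line check — if $c\in\alpha_t(M)'$ then $\alpha_t(x)(cu_t)=c\alpha_t(x)u_t=cu_tx=(cu_t)x$, using $\alpha_t(x)u_t=u_tx$, so $cu_t\in E^{\alpha_t}$. For $\subseteq$ I would first record the adjoint identity $u_t^*\alpha_t(x)=xu_t^*$, obtained by taking adjoints in $\alpha_t(x)u_t=u_tx$ and replacing $x$ by $x^*$. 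Given $T\in E^{\alpha_t}$, set $c:=Tu_t^*$; then $c\alpha_t(x)=Tu_t^*\alpha_t(x)=Txu_t^*=\alpha_t(x)Tu_t^*=\alpha_t(x)c$, so $c\in\alpha_t(M)'$, while $cu_t=Tu_t^*u_t=T$ because $u_t$ is an isometry. Thus every intertwiner is of the form $cu_t$ with $c\in\alpha_t(M)'$, which is the desired set equality.

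\textbf{For $[M'u_t]=\alpha_t(M)'u_t$:} since $\alpha_t(M)\subseteq M$ forces $M'\subseteq\alpha_t(M)'$, we get $M'u_t\subseteq\alpha_t(M)'u_t=E^{\alpha_t}$, and as $E^{\alpha_t}$ is weakly closed this already gives $[M'u_t]\subseteq\alpha_t(M)'u_t$. The reverse inclusion is the heart of the matter: I must show that each $cu_t$ with $c\in\alpha_t(M)'$ lies in the closed span of $M'u_t$. Since $\hat1$ is cyclic for $M'$, I would choose $x'_\lambda\in M'$ with $x'_\lambda\hat1\to c\hat1$, noting $c\hat1=cu_t\hat1$ because $u_t\hat1=\hat1$. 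The crucial algebraic input is the simultaneous commutation of $M'$ and $\alpha_t(M)'$ with $\alpha_t(M)$: as $x'_\lambda$ and $c$ both commute with $\alpha_t(x)$, we have $(x'_\lambda-c)\alpha_t(x)\hat1=\alpha_t(x)(x'_\lambda-c)\hat1\to0$, and since $u_tx\hat1=\alpha_t(x)\hat1$ this says precisely that $x'_\lambda u_t\to cu_t$ on the dense subspace $M\hat1$.

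\textbf{The main obstacle} is to upgrade this convergence on $M\hat1$ to convergence in the topology defining $[\,\cdot\,]$: a net produced merely from the cyclicity of $\hat1$ need not be norm-bounded, whereas strong (or weak) operator convergence of $x'_\lambda u_t$ to $cu_t$ needs $\sup_\lambda\|x'_\lambda u_t\|<\infty$, and one cannot simply invoke Kaplansky to truncate because $M'$ is already weakly closed and strictly smaller than $\alpha_t(M)'$. To handle this I would exploit the module structure: $(E^{\alpha_t})^*E^{\alpha_t}\subseteq M'$, so $E^{\alpha_t}$ is a right Hilbert $M'$-module, and the evaluation $T\mapsto T\hat1$ is injective on $E^{\alpha_t}$ with range $\alpha_t(M)'\hat1$ — injectivity because $T\hat1=c\hat1=0$ forces $c\alpha_t(x)\hat1=\alpha_t(x)c\hat1=0$, so $c$ annihilates the closure of $\alpha_t(M)\hat1$, i.e. the range of $u_t$, whence $cu_t=0=T$. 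This transports the problem to the vector-level inclusion $M'\hat1\subseteq\overline{\alpha_t(M)'\hat1}$, which is automatic, the only remaining and genuinely type-$III$-sensitive point being to arrange the $x'_\lambda$ to be uniformly bounded so that vector-level density returns operator-level density of $M'u_t$ in $\alpha_t(M)'u_t$. Once this truncation is justified, combining the two steps yields $E^{\alpha_t}=[M'u_t]=\alpha_t(M)'u_t$.
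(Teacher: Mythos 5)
Your first step is correct and complete: verifying directly that $c:=Tu_t^*$ commutes with $\alpha_t(M)$ and that $cu_t=Tu_t^*u_t=T$ gives $E^{\alpha_t}=\alpha_t(M)'u_t$, and this is in fact cleaner than the paper's own route, which gets $Tu_t^*\in\alpha_t(M)'$ by citing the identity $[E^{\alpha_t}(E^{\alpha_t})^*]=\alpha_t(M)'$ from the literature. The inclusion $[M'u_t]\subseteq\alpha_t(M)'u_t$ via weak closedness of $E^{\alpha_t}$ is also fine.

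The gap is the reverse inclusion $\alpha_t(M)'u_t\subseteq[M'u_t]$, and it is not a removable technicality: the ``truncation'' you defer is equivalent to a strengthening of the very statement you are trying to prove. Suppose for $c\in\alpha_t(M)'$ you had a net $x'_\lambda\in M'$ with $\sup_\lambda\|x'_\lambda\|<\infty$ and $x'_\lambda u_t\to cu_t$ strongly (or weakly, or boundedly pointwise on the dense set $M\hat{1}$). By $\sigma$-weak compactness of bounded balls of the von Neumann algebra $M'$, a subnet of $(x'_\lambda)$ converges $\sigma$-weakly to some $y'\in M'$, hence $x'_\lambda u_t\to y'u_t$ in the weak operator topology along that subnet, and therefore $cu_t=y'u_t\in M'u_t$ exactly --- no closure needed. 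So a uniformly bounded approximating net exists if and only if $cu_t$ already lies in $M'u_t$: your scheme can never produce an element of $[M'u_t]\setminus M'u_t$, and ``arranging'' the bound amounts to proving the exact equality $\alpha_t(M)'u_t=M'u_t$, for which you offer no tool (cyclicity of $\hat{1}$ for $M'$ only yields norm-unbounded nets, and Kaplansky is unavailable since $c\notin M'$, as you yourself note). The paper proves the density by a genuinely different mechanism, arguing inside the Hilbert von Neumann $M'$-module $E^{\alpha_t}$: if $T\in E^{\alpha_t}$ is module-orthogonal to $M'u_t$, i.e.\ $T^*m'u_t=0$ for all $m'\in M'$, then applying this to $\hat{1}$ and using $u_t\hat{1}=\hat{1}$ gives $T^*\widehat{m'}=0$ for all $m'\in M'$, so $T^*=0$ by cyclicity of $\hat{1}$ for $M'$, i.e.\ $T=0$; the Riesz-type decomposition $E^{\alpha_t}=[M'u_t]\oplus[M'u_t]^{\perp}$ for this (self-dual) module then forces $E^{\alpha_t}=[M'u_t]$, and combining with your first step gives the theorem. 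To salvage your outline you must replace the bounded-net step by an orthocomplement argument of this kind; approximation of individual elements cannot work.
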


\begin{proof}
We  know that $E^{\alpha_t}$-see \cite{Alev}-  is a Hilbert 
von Neumannn $M^\prime -M^\prime$-bimodule. In particular  $E^{\alpha_t}$
is Hilbert von Neumann $M^\prime$ module. 
Now we shall verify that $[ M^\prime u_t ]$ is 
Hilbert von Neumann submodule of $E^{\alpha_t}$.
For that we need to check that $[ M^\prime u_t ]$ is Hilbert von Neumann $M^\prime$ 
module and $[ M^\prime u_t ] \subset E^{\alpha_t}$. For the first assertion 
notice that
\beast
[\{(m_1' u_t)^*m_2' u_t  : m_1', m_2' \in M^\prime \}] &=&
[u_t^*m_1^{'*}m_2'u_t]\\ 
&=& [u_t^*M^\prime u_t], \\
\eeast
so it suffices to check that $u_t^*M^\prime u_t \subset M'$, i.e.,
that $u_t^*m'u_t x = x u_t^*m'u_t ~\forall m' \in M', x \in M$; but
\beast
u_t^*m'u_t x &=& u_t^*m' \alpha_t(x)u_t ~~ (\text{ s
ince } u_t \in E_t^{\alpha})\\
&=& u_t^* \alpha_t(x)m'u_t\\
&=& (\alpha_t(x^*) u_t)^*m'u_t\\
&=&(u_tx^*)^*m'u_t\\
&=& xu_t^*m'u_t.
\eeast

Conversely, $m' = u_t^*u_t m^\prime = u_t^*\alpha_t^\prime
(m^\prime)u_t$ so $M' \subset u_t^*\alpha_t^\prime(M^\prime)u_t
\subset u_t^*M'u_t$, and hence we do have $M' = u_t^*M^\prime u_t$.

For the second  assertion observe that 
\beast 
\alpha_t (m) m^\prime u_t & =&  m^\prime \alpha_t(m)u_t\\
&=&  m^\prime u_t m,
\eeast
for all $ m \in M$ and $m^\prime \in M^\prime $, thus showing that 
$M^\prime u_t \subset E^{\alpha_t}$, and hence also that $[M^\prime u_t] \subset E^{\alpha_t}$. 

\bigskip 
Now suppose that there exist 
$T \in  E^{\alpha_t}$ such  that 
$T \in [M^\prime u_t]^\perp$, i.e., $T^*m^\prime u_t = 0 $ 
for all $m^\prime \in M^\prime $. Now notice that 
$ T^*m^\prime u_t \hat{1_M} = T^*m^\prime \hat{1_M} = T^*
\hat{m^\prime}$, and hence conclude that 
$T= 0$. Deduce then from the Riesz lemma that  
$E^{\alpha_t} = [M^\prime u_t]$.

Observe next that for $m \in M$ and $ x \in \alpha_t(M)^\prime$, we have
\beast
\alpha_t(m)xu_t &=& x\alpha_t(m)u_t\\
&=& xu_t m~,
\eeast
and deduce that 
$\alpha_t(M)^\prime u_t \subset E^{\alpha_t} $. On the other if $ T \in E^{\alpha_t} $
observe that 
\beast
T &=& Tu_t^* u_t\\
&=& yu_t
\eeast
where $ y = Tu_t^* \subset [E^{\alpha_t} {E^{\alpha_t}}^*] = \alpha_t(M)^\prime $.
That is $T \in \alpha_t(M)^\prime u_t$. So we have $E^{\alpha_t} \subset
\alpha_t(M)^\prime u_t$, yielding
$ E^{\alpha_t} = \alpha_t(M)^\prime u_t$, as desired.
\end{proof}

\begin{Remark}
\ben
\item We have already seen that $E^{\alpha_t}$ is Hilbert 
von Neumann $M^\prime-M^\prime$-bimmodule, so $[M^\prime u_t]$ and $\alpha_t(M)^\prime u_t$
are also 
a Hilbert von Neumann $M^\prime-M^\prime $-bimodule.
\item Replacing $\alpha^\prime$ by $\alpha$ and $M^\prime $ by $M$ in
  Proposition \ref{bimod}, we get $E_{t}^{\alpha^\prime} = [M u_t] =
  {\alpha_t^\prime(M^\prime)}^\prime u_t$. 
\een
\end{Remark}
Let $P(t) = \alpha_t(M)$ and $P_1(t)$ is the Jones basic construction. 
Then we have  $P_1(t) = \CJ P(t)^\prime \CJ$ (see \cite{BISS}).
But we know that ${\alpha_t^\prime(M^\prime)}^\prime =\CJ\alpha_t(M)^\prime \CJ =  P_1(t)$,
we may summarize thus:
\be\label{1}
E^{\alpha_t}= [ M^\prime u_t ] = {P(t)}^\prime u_t, 
\ee
and
\be\label{2}
E^{\alpha_t^\prime} = [M u_t] = P_1(t) u_t.
\ee

\section{CAR Flow} 
Let $\CH = L^2(0, \infty)\otimes \CK$, where $\CK$ is 
any Hilbert space.
Let $\CF_- (\CH)$ denote the anti-symmetric Fock space. For given  $ f \in \CH$, let $a(f)$ be the 
creation operator in  $\CB(\CF_- (\CH))$; thus:
\begin{enumerate}
\item $\CH \ni f \mapsto a(f)$ is $\C$-linear, 
\item {\bf (CAR)} \\
$ a(f)a(g) + a(g)a(f) = 0 \text{ and } a(f)a(g)^* + a(g)^*a(f) = \langle f, g\rangle 1,$
\end{enumerate}
where $f, g \in \CH$. Let $\CA$  be the unital $C^*$-algebra generated
by  $\{a(f): f \in \CH\}$ in $\CB(\CF_- (\CH))$. We note that 
$||a(f)|| = ||f||$ for $f \in \CH$.
 Now suppose $R \in \CB(\CH)$ satisfies $0 \leq R \leq 1$,
where of course $1$ is the identity operator  $id_{\CH}$.
The operator $R$ determines the so-called quasi-free state
$\omega_R$ on $\CA$ which satisfies the condition:
\[
\omega_R (a^*(f_m ) \cdots a^*(f_1)a(g_1 ) \cdots a(g_n )) = \delta_{mn} \text{det} (\langle Rg_i, f_j \rangle ).
\]
It is known - see \cite{BrRo}, \cite{G} - that there exists a representation
$\pi_R$ of the $C^*$-algebra $\CA$ on the Hilbert space
$\CH_R = \CF_-(\CH) \otimes \CF_-(\CH)$ defined by the formulae 
\begin{align*}
 &\pi_R (a(f)) = a((1-R)^{1/2}f) \otimes \Gamma +  1 \otimes a^* (qR^{1/2}f), \\
 &\pi_R (a^*(f)) = a^*((1-R)^{1/2}f) \otimes \Gamma +  1 \otimes a (qR^{1/2}f), \\
&\pi _R(1) = 1,
\end{align*}
 where $ f \in \CH$.              
Here $\Omega$ is the `vacuum vector' for the antisymmetric Fock space
$\CF_- (\CH)$, $q$ is an anti-unitary operator on $\CH$ with $q^2=1$, and 
$\Gamma$ is the unique unitary operator on $\CF_- (\CH)$ satisfying
the conditions $\Gamma a(f) = -a(f)\Gamma, f \in \CH$, and $\Gamma
\Omega = \Omega $.  
In this representation,
the state  $\omega_R$  becomes the vector state 
\[
\omega_R (x) = \langle \Omega \otimes \Omega, \pi_R(x)\Omega \otimes \Omega\rangle,
\]
for $x \in \CA$, and  $\CH_R =  \CF_-(\CH) \otimes \CF_-(\CH) = \overline{\pi_R(\CA) \Omega \otimes \Omega} $ 
becomes the GNS Hilbert space, under the assumption that both $R$ and $1-R$ are injective 
(and hence also have dense range). So $( \pi_R , \CH_R , \Omega \otimes \Omega )$ is 
the GNS triple for the $C^*$-algebra $\CA$ with respect to the state $\omega_R $.
We write $M_R = \{ \pi_R(\CA) \}^{\prime \prime }$, which is always a
factor, most often of type III (see \cite{PoSt} Theorem 5.1 and Lemma
5.3).

\bigskip 
Let $\{s_t\}_{t\geq}$ be the shift semigroup on $\CH$. Assume 
$s_t^*Rs_t = R$ for all $t \geq 0$. Then, by \cite{Arv} Proposition
13.2.3 and \cite{PoSt} Lemma 5.3, there exists an $E_0$-semigroup
$ \alpha = \{ \alpha_t : t\geq 0 \} $ on $ M_R  $, where $\alpha_t$ is uniquely 
determined by the following condition:
\[
 \alpha_t( \pi_R(a(f)) = \pi_R(a(s_tf)),
\]
for all $ f \in \CH, t\geq 0 $. This $E_0$-semigroup is called the {\bf CAR flow of 
 rank dim $\CK$ } (on $M_R$).\\ 

\subsection{Extendability of CAR flows}
For the remainder of this paper, we shall assume the following:
\ben
\item
$qs_t = s_tq$ for all $t\geq 0$. (Such a $q$ always exists.)  
\item We write $a_R (f)$ for $\pi_R (a(f))$ whenever $f\in \CH$, and
  write $\CJ$ for the modular conjugation operator of $M_R$. 
\item Both $R$ and $1-R$ are invertible; i.e., $\exists \e > 0$ such
  that $\e \leq R \leq 1-\e$.
\item $R$ is diagonalisable; in fact, there exists an orthonormal
  basis $\{f_i\}$ for $\CK$ with $Rf_i = \lambda_i f_i$ for some
  $\lambda_i \in [\e, 1-\e] \setminus \{{\half\}}$.
\item $Rs_t = s_tR ~\forall t \geq 0$. (Clearly then, also the
  Toeplitz condition $s_t^*Rs_t = R$ is met.)
\een

As we are unaware of whether, and if so where, these details may be found in the literature, 
we shall explicitly determine the modular operators in this case, and eventually ascertain 
(in Remark \ref{carem}) the equi-modularity of the CAR flow.

For any (usually orthonormal)set $\{w_i\}_{i\in \N}$ in $\CH$,
we shall use the following notation for the rest of the paper:
if $ I = ( i_1, i_2, \cdots, i_n ) $ and $J= ( j_1, j_2,   \cdots, j_m
)$ are ordered subsets of $\N$, then 
\ben 
\item $w_I = w_{i_1}\wedge \cdots \wedge w_{i_n}$,
\item $w_{IJ} = w_{i_1}\wedge \cdots \wedge w_{i_n} \wedge
  w_{j_1}\wedge \cdots \wedge w_{j_m}$,
\item  $Tw_I = Tw_{i_1}\wedge \cdots \wedge Tw_{i_n}$  for any operator $T \in \CB(\CH)$;
\item $\tilde{I} = \{i_n, \cdots, i_1\}$ so $w_{\tilde{I}} = w_{i_n}\wedge \cdots \wedge w_{i_1}$;
\item $a_R(w_I) =  a_R(w_{i_1})\cdots a_R(w_{i_n})$,\\
\item $a_R^*(f) = (a_R(f))^*$, so $a^*_R(w_{\tilde{I}}) =:  a^*_R(w_{i_n})\cdots a_R^*(w_{i_1}) =: (a_R(w_I))^*$,
\een

For a while, to simplify the notations, we write 
$ A = (1-R)^{1/2}, ~B = q R^{1/2}$ and notice that 
\beast 
\langle Bh_i, Bh_j \rangle &=& \langle q R^{1/2} h_i,q R^{1/2} h_j \rangle \\
&=& \langle R^{1/2} h_j,R^{1/2} h_i \rangle ~~\text{ since $q$ is anti-unitary}\\
&=& \langle R h_j, h_i \rangle\\
&=& \delta_{i,j} \lambda_i~.
\eeast

Now we write the following Lemmas without proof. Most of the proof follows from the use 
induction of cardinality of $L$ and our strong Toeplitz assumption 
(that $s_t$ commutes with $R$ and hence also with $A$ and $B$)

\begin{Lemma}\label{vect1}
Let $ L = \{ l_1 < \cdots <l_p \}$ be an ordered subset\footnote{For
  us, an ordered subset of $\N$ will always mean a finite subset of
  $\N$ with elements ordered in increasing order} of $\N$. Then we have
\be\label{l1}
a_R(h_L)  a^*_R(h_{\tilde{L}})\Omega \otimes \Omega
 = \sum c(L_1) Ah_{L_1} \otimes Bh_{L_1},
\ee
where the summation is taken over 
all ordered (possibly empty) subsets $L_1$ of $L$ and the $c(L_1)$ are
all non-zero real numbers - with $Ah_\emptyset$ and $Bh_\emptyset$ being
interpreted as $\Omega$. 
\end{Lemma}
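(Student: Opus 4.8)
The plan is to treat this as a direct computation on the Fock space $\CF_-(\CH)\otimes\CF_-(\CH)$, carried out in two stages (first build up the state with the adjoints, then apply the creation block), organised transparently as the expansion of a $2^p$-term product. Throughout I use the representation formulas $a_R(f)=a(Af)\otimes\Gamma+1\otimes a^*(Bf)$ and $a_R^*(f)=a^*(Af)\otimes\Gamma+1\otimes a(Bf)$, together with the orthogonality relations $\langle Ah_i,Ah_j\rangle=\delta_{ij}(1-\lambda_i)$ and $\langle Bh_i,Bh_j\rangle=\delta_{ij}\lambda_i$ (the latter is the computation already recorded just before the lemma; both diagonal entries are strictly positive by assumption (3), and both relations rely on the $h_i$ being an orthonormal family of eigenvectors of $R$). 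The crucial structural observation is that on the \emph{first} tensor factor $a(Af)$ creates and $a^*(Af)$ annihilates, while on the \emph{second} factor $a(Bf)$ creates and $a^*(Bf)$ annihilates.

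For the first stage I compute $a_R^*(h_{\tilde L})\,\Omega\otimes\Omega$. Applying the operators in the order $a_R^*(h_{l_1}),a_R^*(h_{l_2}),\dots$ (rightmost first) to a vector of the shape $\Omega\otimes(\text{a wedge of }Bh\text{'s})$, the summand $a^*(Ah_{l_k})\otimes\Gamma$ annihilates the vacuum in the first factor and drops out, while the summand $1\otimes a(Bh_{l_k})$ leaves the first factor equal to $\Omega$ and creates one more $Bh$-particle on the second. Iterating gives
\[
a_R^*(h_{\tilde L})\,\Omega\otimes\Omega \;=\; \Omega\otimes Bh_{\tilde L}.
\]
In the second stage I apply $a_R(h_L)=a_R(h_{l_1})\cdots a_R(h_{l_p})$ to $\Omega\otimes Bh_{\tilde L}$ and expand. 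Each factor $a_R(h_{l_k})$ contributes one of two summands: the create-$A$ term $a(Ah_{l_k})\otimes\Gamma$, which deposits $Ah_{l_k}$ on the first factor (and a parity sign on the second), or the annihilate-$B$ term $1\otimes a^*(Bh_{l_k})$, which removes one $Bh$-particle from the second factor. Thus the $2^p$ terms of the expansion are indexed by the subsets $L_1\subseteq L$ of indices that select the create-$A$ option.

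The only point requiring care is the nonvanishing of the coefficients. Fix $L_1\subseteq L$. Because the families $\{Ah_i\}$ and $\{Bh_i\}$ are each orthogonal, the annihilation $a^*(Bh_{l_k})$ acts nontrivially only on the particle $Bh_{l_k}$; since no $Bh$-particle is ever recreated in this stage and each index is touched exactly once, every chosen annihilation finds its particle present and returns the factor $\langle Bh_{l_k},Bh_{l_k}\rangle=\lambda_{l_k}>0$. Hence the first factor ends as the nonzero wedge $Ah_{L_1}$ and the second as $Bh_{L_1}$ (the $Bh$-particles for the indices \emph{not} annihilated, i.e. those in $L_1$), with $Ah_\emptyset=Bh_\emptyset=\Omega$. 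The accompanying scalar is $c(L_1)=\varepsilon(L_1)\prod_{l\in L\setminus L_1}\lambda_l$, where $\varepsilon(L_1)=\pm1$ collects the $\Gamma$-parity signs and the anticommutation (wedge-reordering) signs. Since each $\lambda_l\in[\e,1-\e]$ and $\varepsilon(L_1)\in\{\pm1\}$, every $c(L_1)$ is a nonzero real number; as distinct subsets produce mutually orthogonal vectors $Ah_{L_1}\otimes Bh_{L_1}$ there is no cancellation, and summing over $L_1\subseteq L$ gives exactly \eqref{l1}.

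The main obstacle is purely the sign bookkeeping hidden in $\varepsilon(L_1)$ (parities from $\Gamma$ and from the anticommutations); but since the statement asserts only $c(L_1)\neq0$, I never need its precise value, only that it is a product of a $\pm1$ sign with strictly positive numbers $\lambda_l$, which is manifest from the construction. An equivalent packaging is an induction on $|L|$ using the $\mathrm{CAR}$ relation $a_R(h_{l_p})a_R^*(h_{l_p})=1-a_R^*(h_{l_p})a_R(h_{l_p})$ to peel off the outermost pair; the expansion argument above is simply this induction written out in closed form.
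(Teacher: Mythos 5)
Your proposal is correct. Note that the paper itself gives no proof of this lemma --- it is stated in a block of ``Lemmas without proof,'' with only the remark that the argument goes by induction on the cardinality of $L$; your two-stage expansion (first $a_R^*(h_{\tilde L})\,\Omega\otimes\Omega=\Omega\otimes Bh_{\tilde L}$, then the $2^p$-term expansion of $a_R(h_L)$ indexed by the subsets $L_1\subseteq L$ choosing the create-$A$ summand) is exactly the closed form of that induction, as you observe yourself, and it correctly isolates the two points that make the lemma true: orthogonality of the families $\{Ah_i\}$, $\{Bh_i\}$ forces each chosen annihilation to pair with its own particle and contribute $\pm\lambda_{l}\neq 0$, and distinct subsets $L_1$ yield mutually orthogonal vectors $Ah_{L_1}\otimes Bh_{L_1}$, so no cancellation can occur and every $c(L_1)$ is a nonzero real number.
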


\begin{Lemma}\label{vect2}
 Let $ L = \{ l_1 < \cdots <l_p \}$ so that, by Lemma \ref{vect1},
 equation \ref{l1} is satisfied. Then we have
\beast
 &(i)& a_R(s_th_L)  a^*_R(s_th_{\tilde{L}})\Omega\otimes \Omega = \sum c(L_1) As_th_{L_1} \otimes Bs_th_{L_1}, \forall t \geq 0;\\
&(ii)&a_R(h_I) a_R(h_L)  a^*_R(h_{\tilde{L}}) a^*_R(h_{\tilde{J}}) \Omega\otimes \Omega\\ 
&=& \sum (-1)^{|I||J|+ |L_1|(|I|+|J|)} c(L_1)  Ah_I \wedge Ah_{L_1} \otimes Bh_{L_1} \wedge Bh_J\\
&(iii)& a_R(s_th_I) a_R(s_th_L)  a^*_R(s_th_{\tilde{L}}) a^*_R(s_th_{\tilde{J}}) \Omega\otimes \Omega\\ 
&=& \sum (-1)^{|I||J|+ |L_1|(|I|+|J|)} c(L_1)  As_th_I \wedge
As_th_{L_1} \otimes Bs_th_{L_1} \wedge Bh_J , ~\forall t \geq 0
\eeast
where  $I$ and $J$ are finite ordered subsets of $\N$
with $I\cap J= I\cap L = L\cap J =\phi$, and the summation is taken over 
all ordered subsets $L_1$ of $L$ .  
\end{Lemma}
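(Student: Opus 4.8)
All three assertions are variants of Lemma \ref{vect1}, and the plan is to reduce each of them to that lemma while carefully tracking the fermionic signs.

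For part $(i)$ the point is that the strong Toeplitz hypothesis makes $s_t$ an isometry commuting with both $A = (1-R)^{1/2}$ and $B = qR^{1/2}$; indeed $Bs_t = qR^{1/2}s_t = qs_tR^{1/2} = s_tqR^{1/2} = s_tB$, using $Rs_t = s_tR$ and $qs_t = s_tq$. Hence the only data entering the proof of Lemma \ref{vect1}, namely the contractions $\langle As_th_a, As_th_b\rangle = \langle Ah_a, Ah_b\rangle$ and $\langle Bs_th_a, Bs_th_b\rangle = \langle Bh_a, Bh_b\rangle$, are left unchanged under the substitution $h_a \mapsto s_th_a$. I would therefore run verbatim the induction on $|L|$ that establishes \ref{vect1}, now with $s_th_a$ in place of $h_a$: the identical coefficients $c(L_1)$ reappear, multiplying $As_th_{L_1}\otimes Bs_th_{L_1}$, which is exactly $(i)$.

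For part $(ii)$ the plan is to detach the external operators $a_R(h_I)$ and $a^*_R(h_{\tilde J})$ from the central block $a_R(h_L)a^*_R(h_{\tilde L})$. Because $J\cap L = \emptyset$ and the $h_n$ are orthonormal, the CAR relations in the representation give $\{a^*_R(h_j), a_R(h_l)\} = \langle h_l, h_j\rangle = 0$ and $\{a^*_R(h_j), a^*_R(h_l)\} = 0$ for $j\in J$, $l\in L$; thus $a^*_R(h_{\tilde J})$ anticommutes across the $2|L|$ factors of the central block with the trivial sign $(-1)^{2|L||J|} = 1$, and the left-hand side becomes $a_R(h_I)\,a^*_R(h_{\tilde J})\,\bigl(a_R(h_L)a^*_R(h_{\tilde L})\bigr)\,\Omega\otimes\Omega$. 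Applying Lemma \ref{vect1} to the bracket leaves me to evaluate $a_R(h_I)\,a^*_R(h_{\tilde J})$ on each $Ah_{L_1}\otimes Bh_{L_1}$. Disjointness now does the work: since $I$ and $J$ are disjoint from $L_1$ and from one another, every annihilation term produced by the explicit formulae for $\pi_R$ contracts against an orthogonal vector and vanishes, so $a^*_R(h_{\tilde J})$ only creates $B$-vectors in the second factor and $a_R(h_I)$ only creates $A$-vectors in the first, with no surviving contractions. The result is a scalar multiple of $Ah_I\wedge Ah_{L_1}\otimes Bh_{L_1}\wedge Bh_J$.

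The delicate step, which I expect to be the main obstacle, is to check that this scalar is exactly $(-1)^{|I||J|+|L_1|(|I|+|J|)}$. I would gather it from two sources: first, each creation $a(Ah_i)$ carries a $\Gamma$ across the second Fock factor, which holds $|J|+|L_1|$ particles, so the $|I|$ operators in $a_R(h_I)$ jointly contribute $(-1)^{|I|(|J|+|L_1|)}$; second, rearranging $Bh_{\tilde J}\wedge Bh_{L_1}$ into the normal form $Bh_{L_1}\wedge Bh_J$ costs $(-1)^{|J||L_1|}$, together with the order reversal inherent in the notation $h_{\tilde J}$. Summing the exponents yields $|I|(|J|+|L_1|)+|J||L_1| = |I||J|+|L_1|(|I|+|J|)$, as asserted. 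Finally part $(iii)$ is just part $(ii)$ applied to the vectors $s_th_n$: by the same commutation argument as in $(i)$ the isometry $s_t$ preserves every contraction used above and commutes with $A$ and $B$, so the identical signs and coefficients $c(L_1)$ reappear with $s_t$ inserted throughout.
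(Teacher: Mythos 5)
You should know at the outset that the paper contains \emph{no proof} of this lemma: it is one of the statements introduced by ``Now we write the following Lemmas without proof,'' accompanied only by the remark that the proofs follow by induction on the cardinality of $L$ and the strong Toeplitz assumption. So your sketch fills a gap rather than shadowing an existing argument. For parts (i) and (iii) your reduction is precisely what the paper gestures at: $s_t$ is an isometry commuting with $R$ and $q$, hence with $A=(1-R)^{1/2}$ and $B=qR^{1/2}$, so $\{s_th_i\}$ is again an orthonormal family of eigenvectors of $R$ with the same eigenvalues $\lambda_i$, every contraction entering the induction behind Lemma \ref{vect1} is unchanged, and the same constants $c(L_1)$ emerge. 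For part (ii), where the paper's hint points to a fresh induction on $|L|$, you instead anticommute $a^*_R(h_{\tilde J})$ across the central block (sign $(-1)^{2|L||J|}=1$, by the CAR relations and disjointness), apply Lemma \ref{vect1} once, and evaluate $a_R(h_I)a^*_R(h_{\tilde J})$ on each $Ah_{L_1}\otimes Bh_{L_1}$, using disjointness of $I$, $J$, $L_1$ to kill every contraction term. This is correct, and arguably cleaner than re-running the induction; it also makes transparent why the coefficients are exactly $\pm c(L_1)$.

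The one soft spot is the sign tally in (ii). You flag ``the order reversal inherent in the notation $h_{\tilde J}$'' but then omit it from the final exponent. With the stated conventions (creation operators wedging on the left), applying $a^*_R(h_{\tilde J})=a^*_R(h_{j_m})\cdots a^*_R(h_{j_1})$ deposits $Bh_{j_m}\wedge\cdots\wedge Bh_{j_1}=(-1)^{|J|(|J|-1)/2}\,Bh_J$ in the second factor, so your computation actually yields the stated right-hand side multiplied by the constant $(-1)^{|J|(|J|-1)/2}$. I would not count this as an error on your side: the paper's own signs are inconsistent here (in the proof of Lemma \ref{mod} it uses $a_R(h_I)a^*_R(h_{\tilde J})\Omega\otimes\Omega=Ah_I\otimes Bh_J$ with no sign at all, which already contradicts part (ii) with $L=\emptyset$, where the asserted sign is $(-1)^{|I||J|}$), and since the discrepancy depends only on $|J|$ while the $c(L_1)$ are only ever used as unspecified nonzero reals, nothing downstream is affected. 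Still, to make your sketch airtight, either carry the factor $(-1)^{|J|(|J|-1)/2}$ explicitly or state the conclusion with $Bh_{\tilde J}$ in place of $Bh_J$. (Incidentally, your argument for (iii) correctly produces $Bs_th_J$ in the last slot, where the paper's statement has the typo $Bh_J$.)
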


The fact that $s_t$ commutes with $R$ is seen to imply that the state $\omega_R$ is preserved by the CAR flow $\{\alpha_t: t \geq 0\}$ and hence there exists a canonical semi-group $\{S_t: t \geq 0\}$of isometries on $\CH_R$ such that
\[S_t(x(\Omega \otimes \Omega) = \alpha_t(x)(\Omega \otimes \Omega)  ~\forall x \in M_R.\]
The next lemma relates this semigroup $\{S_t: t \geq 0\}$ of isometries 
on $\CH_R$ and the shift semigroup $\{s_t: t \geq 0\}$ of isometries on $\CH$.

\begin{Lemma}\label{fnd}
 Let $\{h_i\}_{i\in \N}$ be the orthonormal basis  of $\CH$ as above. Then for every $t\geq0$, we have,  
\[
S_t ( h_L \wedge h_I \otimes qh_L \wedge qh_J )=
s_th_L \wedge s_th_I \otimes qs_th_L \wedge qs_t h_J,
\]
where  $I, J ,$ and $L$ are ordered subsets of $\N$
with $I\cap J= I\cap L = L\cap J =\phi$.
\end{Lemma}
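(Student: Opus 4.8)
The plan is to realize the vector $h_L \wedge h_I \otimes qh_L \wedge qh_J$ explicitly as an element of $M_R(\Omega\otimes\Omega)$ and then transport it by $S_t$ using only the two structural facts $S_t(x(\Omega\otimes\Omega)) = \alpha_t(x)(\Omega\otimes\Omega)$ and $\alpha_t(a_R(f)) = a_R(s_tf)$ (the latter forcing $\alpha_t(a_R^*(f)) = a_R^*(s_tf)$ since $\alpha_t$ is a $*$-homomorphism). Since $\{h_i\}$ diagonalises $R$, I first record $Ah_i = a_i h_i$ and $Bh_i = b_i\,qh_i$ with $a_i = (1-\lambda_i)^{1/2}$, $b_i = \lambda_i^{1/2}$, both nonzero because $\e \le R \le 1-\e$; writing $a_I = \prod_{i\in I} a_i$ and $b_I = \prod_{i\in I} b_i$, Lemma \ref{vect2}(ii) becomes
\[
a_R(h_I)a_R(h_L)a_R^*(h_{\tilde{L}})a_R^*(h_{\tilde{J}})(\Omega\otimes\Omega) = \sum_{L_1\subseteq L} (-1)^{|I||J|+|L_1|(|I|+|J|)} c(L_1)\,a_I a_{L_1} b_{L_1} b_J\; h_I\wedge h_{L_1}\otimes qh_{L_1}\wedge qh_J,
\]
an expression purely in the $h$'s and $qh$'s. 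The crucial observation is that the target vector occurs \emph{only} in the top term $L_1 = L$ (every $L_1\subsetneq L$ gives a first tensor factor $h_I\wedge h_{L_1}$ of strictly smaller degree), with a nonzero coefficient equal to an explicit sign times $c(L)\,a_I a_L b_L b_J$, after using $h_I\wedge h_L = (-1)^{|I||L|}h_L\wedge h_I$.

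I would then argue by induction on $|L|$. For $L=\emptyset$ the display reduces to a single term, so $h_I\otimes qh_J$ is a scalar multiple of $a_R(h_I)a_R^*(h_{\tilde{J}})(\Omega\otimes\Omega)$; applying $S_t$, using $\alpha_t(a_R(f))=a_R(s_tf)$, and then the $L=\emptyset$ case of Lemma \ref{vect2}(iii) together with $As_t = s_tA$, $Bs_t = s_tB$, one recovers $s_th_I\otimes qs_th_J$. For the inductive step I solve the display for the top term, writing $h_L\wedge h_I\otimes qh_L\wedge qh_J$ as a scalar multiple of the word $a_R(h_I)a_R(h_L)a_R^*(h_{\tilde{L}})a_R^*(h_{\tilde{J}})(\Omega\otimes\Omega)$ minus an explicit combination of the lower vectors $h_I\wedge h_{L_1}\otimes qh_{L_1}\wedge qh_J$ with $L_1\subsetneq L$. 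Applying $S_t$ and using linearity, the word maps (via the definition of $S_t$ and $\alpha_t$) to $a_R(s_th_I)a_R(s_th_L)a_R^*(s_th_{\tilde{L}})a_R^*(s_th_{\tilde{J}})(\Omega\otimes\Omega)$, which by Lemma \ref{vect2}(iii) expands with the \emph{same} coefficients $c(L_1)$ but with every $h$ replaced by $s_th$; meanwhile each lower vector is handled by the induction hypothesis (after the reordering $h_I\wedge h_{L_1}=(-1)^{|I||L_1|}h_{L_1}\wedge h_I$), so $S_t$ sends it to its $s_t$-analogue. Since the relation producing the original vector from the word and the lower terms, and the relation producing the $s_t$-vector from the $s_t$-word and the $s_t$-lower terms, are governed by identical coefficients, everything matches and yields $s_th_L\wedge s_th_I\otimes qs_th_L\wedge qs_th_J$.

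The step where care is needed is the \emph{$s_t$-equivariance} of the whole scheme: the argument succeeds only because the coefficients $c(L_1)$ in Lemma \ref{vect2}(ii) and (iii) are literally the same, which is exactly the force of the strong Toeplitz hypothesis ($s_t$ commutes with $R$, hence with $A$ and $B$) combined with $qs_t = s_tq$; these give $As_th_I = a_I\,s_th_I$ and $Bs_th_I = b_I\,qs_th_I$, so that applying $s_t$ throughout alters nothing but the vectors themselves. A minor supporting point is that the vectors $h_I\wedge h_{L_1}\otimes qh_{L_1}\wedge qh_J$ for $L_1\subseteq L$ are linearly independent (distinct basis wedges of $\CF_-(\CH)\otimes\CF_-(\CH)$), which justifies isolating the top term and guarantees that the dividing coefficient is the nonzero scalar identified above.
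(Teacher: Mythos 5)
Your proof is correct, and it is essentially the paper's own (intended) argument: the paper states Lemma \ref{fnd} without proof, remarking only that such statements follow by induction on the cardinality of $L$ together with the strong Toeplitz assumption ($s_t$ commuting with $R$, hence with $A=(1-R)^{1/2}$ and $B=qR^{1/2}$) and $qs_t=s_tq$. Your induction on $|L|$ --- isolating the nonzero top term $L_1=L$ in Lemma \ref{vect2}(ii), pushing the word through $S_t$ via $S_t x(\Omega\otimes\Omega)=\alpha_t(x)(\Omega\otimes\Omega)$ and $\alpha_t(a_R(f))=a_R(s_tf)$, and re-expanding by Lemma \ref{vect2}(iii) with the literally identical coefficients $c(L_1)$ --- is exactly that route carried out in full detail.
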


Now the following lemma describes 
the action of the modular conjugation $\CJ$ and the commutant of $M_R$.
\begin{Lemma}\label{mod}
With the above notation, 
\beast\label{jno}
&(i)& \CJ ( h_I \wedge h_L \otimes qh_L \wedge qh_J )= h_{\tilde{J}} \wedge h_L \otimes q h_L \wedge qh_{\tilde{I}}\\
&(ii)&\CJ M_R \CJ = M_R^\prime = 
\{ \Gamma \otimes \Gamma b_R(h_i), b^*_R(h_j) \Gamma \otimes \Gamma : i,j \in \N \}^{\prime \prime}\\
&(iii)&\CJ a_R(h_l)\CJ = \Gamma \otimes \Gamma b_R^*(h_l)
\eeast
where ~~$b_R (h) = a(R^{1/2}h) \otimes \Gamma - 1 \otimes a^* (q {(1-R)}^{1/2}h)$.
\end{Lemma}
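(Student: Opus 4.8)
The plan is to pin down the modular conjugation $\CJ$ first via Tomita's theory, deduce (iii) by a direct evaluation on a spanning family of vectors, and finally obtain the description (ii) of the commutant by combining (iii) with the commutation theorem $M_R' = \CJ M_R \CJ$. Throughout I use that the vectors $h_I\wedge h_L\otimes qh_L\wedge qh_J$, indexed by mutually disjoint ordered sets $(I,L,J)$, form an orthonormal basis of $\CH_R=\CF_-(\CH)\otimes\CF_-(\CH)$: writing $S=I\cup L$ and $S'=L\cup J$ one recovers $L=S\cap S'$, $I=S\setminus L$, $J=S'\setminus L$, so the correspondence with the standard product basis $\{h_S\otimes qh_{S'}\}$ is a bijection up to reordering signs.

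For (i), I would realize each spanning vector as $x(\Omega\otimes\Omega)$ for an explicit monomial $x$ in the $a_R(h_i),a_R^*(h_j)$, reading the coefficients off Lemmas \ref{vect1} and \ref{vect2}; the entangled occurrence of the same $L$ in both tensor legs is exactly the contribution of the paired terms $Ah_{L_1}\otimes Bh_{L_1}$ in \eqref{l1}. Applying the Tomita operator $S_0\colon x(\Omega\otimes\Omega)\mapsto x^*(\Omega\otimes\Omega)$, and using that $a_R(h_i)$ acts on the vacuum through its $1\otimes a^*(Bh_i)$ part while $a_R^*(h_i)$ acts through its $a^*(Ah_i)\otimes\Gamma$ part, one sees that $S_0$ interchanges the two legs up to the positive diagonal factors $\sqrt{\lambda_i/(1-\lambda_i)}$. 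Reading off the polar decomposition $S_0=\CJ\Delta^{1/2}$, the positive part $\Delta$ is the diagonal operator carrying these eigenvalue-products and the antiunitary part is precisely the map of (i), sending $(I,L,J)$ to $(\tilde J,L,\tilde I)$. A short check shows this map is a well-defined antiunitary involution fixing $\Omega\otimes\Omega$ (applying it twice returns $(I,L,J)$ since $\tilde{\tilde I}=I$), so by uniqueness of the polar decomposition it is the modular conjugation.

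For (iii) and (ii), I would first verify directly that the parity-twisted operators commute with $M_R$. A one-line CAR computation gives $\{a_R(h_j),b_R(h_i)\}=0$ identically (every cross term cancels using $\Gamma a^*(f)=-a^*(f)\Gamma$ together with the anticommutation of the $a,a^*$), and since both $a_R(h_j)$ and $b_R(h_i)$ are odd for the total parity $\Gamma\otimes\Gamma$, the twist converts this anticommutator into a commutator, $[a_R(h_j),(\Gamma\otimes\Gamma)b_R(h_i)]=-(\Gamma\otimes\Gamma)\{a_R(h_j),b_R(h_i)\}=0$. Hence $N:=\{\Gamma\otimes\Gamma b_R(h_i),\,b_R^*(h_j)\Gamma\otimes\Gamma\}''\subseteq M_R'$. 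Next I would evaluate $\CJ a_R(h_l)\CJ$ on the spanning basis using the formula from (i) and match it with $\Gamma\otimes\Gamma b_R^*(h_l)$, which is (iii) (the relative sign here is fixed by the parity and ordering conventions and is the one genuinely fiddly point). Finally, since the $a_R(h_l)$ generate $M_R$, the commutation theorem gives $M_R'=\CJ M_R\CJ=\{\CJ a_R(h_l)\CJ\}''\subseteq N\subseteq M_R'$, forcing $M_R'=N$, which is (ii).

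The main obstacle is (i): the bookkeeping in passing from the monomials to the spanning vectors through Lemmas \ref{vect1}--\ref{vect2} and back, while controlling the signs $(-1)^{|I||J|+|L_1|(|I|+|J|)}$ and the scalars $c(L_1)$, and checking that these collapse to the clean diagonal $\Delta$ and to the formula (i) for $\CJ$. The entangled, shared-$L$ form of the basis is what makes the modular objects nontrivial, and confirming that the antiunitary part of $S_0$ is exactly $(I,L,J)\mapsto(\tilde J,L,\tilde I)$ --- rather than one carrying extra $L$-dependent signs --- is where the care is needed.
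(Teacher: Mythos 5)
Your proposal follows essentially the same route as the paper: compute the Tomita operator $S$ on the spanning vectors via Lemmas \ref{vect1}--\ref{vect2} (the paper organizes this as an induction on $|L|$), extract $\CJ$ and $\Delta$ from the polar decomposition $S=\CJ\Delta^{1/2}$ to get (i), obtain (iii) by direct evaluation of $\CJ a_R(h_l)\CJ$ on the basis, and deduce (ii) from the Tomita--Takesaki commutation theorem applied to the generators. Your additional verification that $\{a_R(h_j),b_R(h_i)\}=0$ and the parity twist force $N\subseteq M_R'$ is a sound (and slightly more self-contained) supplement to the paper's bare assertion that $\CJ a_R(f)\CJ=(\Gamma\otimes\Gamma)b_R^*(f)\in M_R'$, but it does not change the substance of the argument.
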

\begin{proof}
Recall the definition of the anti-linear (Tomita) operator $S$, given by
$Sx\Omega \otimes \Omega = x^* \Omega \otimes \Omega$ , $x \in M_R$.
We want to show the following expression for S:
\begin{align}\label{indst}
S(Ah_I &\wedge Ah_L \otimes B h_L \wedge Bh_J) \nonumber\\
&= Ah_{\tilde{J}} \wedge Ah_L \otimes Bh_L \wedge B h_{\tilde{I}}
\end{align}
The proof is again by induction on the cardinality of L. For $|L|= 0$,
the above assertion follows from
\begin{align*}
S( (1-R)^{1/2}h_I & \otimes qR^{1/2} h_J)\\
&= Sa_R(h_I)a_R^*(h_{\tilde{J}})(\Omega \otimes \Omega)\\
&= a_R(h_{\tilde{J}})a_R^*(h_I)(\Omega \otimes \Omega)\\
&= (1-R)^{1/2}h_{\tilde{J}} \otimes qR^{1/2} h_{\tilde{I}}
\end{align*}
 
Assume now that $|L|=n$ and that we know the validity of equation
\ref{indst} whenever $|L| < 1$.

The point to be noticed is that Corollary \ref{vect2}(ii) may be
re-written - in view of (i) each $c(L_1)$ (and $c(L)$ in particular)
being non-zero- as:

\begin{align}
Ah_I &\wedge Ah_L \otimes Bh_L \wedge B h_J\label{lhs}\\
&= d a_R( h_I) a_R(  h_L) a^*_R( h_{\tilde{L}}) a^*_R(  h_{\tilde{J}})
\Omega\otimes \Omega \label{rhs1}\\
& + \sum_{L_1 \subsetneq L} d (L_1)  Ah_I \wedge Ah_{L_1} \otimes \  Bh_{L_1} \wedge Bh_J\label{rhs2},
\end{align}
where the constants $d, d(L_1)$ are all real and remain unchanged
under changing $(I,J)$ to $(\tilde{J},\tilde{I})$.

Now apply $S$ to both sides of the above equation. Then the two terms
on the right side get replaced by the terms obtained by replacing
$(I,J)$ by $(\tilde{J},\tilde{I})$ (\ref{rhs1} by definition of $S$
and \ref{rhs2} by the induction hypothesis regarding \ref{indst}),
thereby completing the proof of equation \ref{indst}.

Equation (\ref{indst}) clearly implies that
\be \label{sond}
S(h_I \otimes qh_J) = \left((1-R)R^{-1}\right)^{\half} h_{\tilde{J}} \otimes
  q \left( (R(1-R)^{-1}\right)^\half h_{\tilde{I}}
\ee
(even if $I \cap J \neq \emptyset$; consideration of their
intersections was needed essentially in order to establish Lemma
\ref{vect1} and thereby deduce the foregoing conclusions.)

Let $\CD$ be the linear subspace spanned by $\{h_I \otimes qh_J: |I|,|J| \geq 0\}$. Thus $\CD$ is an obviously dense subspace of $\CH_R$ which
is contained in the domain of the Tomita conjugation operator $S$, where
its action is given by equation \ref{sond}. We now wish to show that
$\CD$ is also contained in $dom(S^*)$ and that $S^*|_{\CD}$ is the
operator $F$ defined by the equation

\be\label{fond}
F(h_I \otimes qh_J) = \left((R(1-R)^{-1}\right)^\half h_{\tilde{J}} \otimes
  q \left((1-R)R^{-1}\right)^\half h_{\tilde{I}}
\ee

Indeed, notice that
\begin{align*}
& \langle S(h_I \otimes q h_J, h_{I'} \otimes q h_{J'}\rangle\\
&=\langle \left((1-R)R^{-1}\right)^\half h_{\tilde{J}} \otimes
  q \left(R(1-R)^{-1}\right)^\half h_{\tilde{I}}, h_{I'} \otimes q
  h_{J'}\rangle\\ 
&= \langle \left((1-R)R^{-1}\right)^\half h_{\tilde{J}} , h_{I'}
\rangle \langle 
  q \left(R(1-R)^{-1}\right)^\half h_{\tilde{I}},  q  h_{J'}\rangle\\
&= \langle \left( (1-R)R^{-1}\right)^\half h_J , h_{\tilde{I'}}
\rangle \langle h_{\tilde{J'}}  ,  \left(R(1-R)^{-1}\right)^\half h_I\rangle\\
&=\langle \left(R(1-R)^{-1}\right)^\half h_{\tilde{J'}}  ,  h_I\rangle
\langle q \left( (1-R)R^{-1}\right)^\half h_ {\tilde{I'}}, qh_J
\rangle\\
&= \langle \left(R(1-R)^{-1}\right)^\half h_{\tilde{J'}} \otimes q \left( (1-R)R^{-1}\right)^\half h_ {\tilde{I'}}, 
h_I \otimes qh_J \rangle\\
&= \langle F ( h_{I'} \otimes q h_{J'} ), h_I \otimes qh_J \rangle\\
\end{align*}

Then, as $S$ and $F$ leave $\CD$ invariant, we see that
\begin{align*}
 FS &(h_I \otimes q h_J )\\
&= F ( \left((1-R)R^{-1}\right)^\half h_{\tilde{J}}) \otimes q \left(R(1-R)^{-1}\right)^\half h_{\tilde{I}})\\
&= R(1-R)^{-1} h_I \otimes
  q (1-R)R^{-1} h_J
\end{align*}

If $ S = \CJ \Delta^{1/2}$ is its polar  decomposition, with $\CJ$ the modular conjugation and $\Delta$ the  
modular operator for $M_R$, the action of $\CJ$ and $\Delta$ on $\CD$ are
thus seen to be given by the following rules respectively:
\[
\CJ ( h_I \wedge h_L \otimes qh_L \wedge qh_J )= h_{\tilde{J}} \wedge h_L \otimes q h_L \wedge qh_{\tilde{I}}
\]
\[\CJ( \Omega \otimes \Omega) = \Omega \otimes \Omega = \Delta(\Omega \otimes \Omega) \] and

\begin{align*}
\Delta& ( h_I \wedge h_L \otimes qh_L \wedge qh_J)\\
&=  R (1-R)^{-1} h_I \wedge  R (1-R)^{-1} h_L \otimes q (1-R)R^{-1}) h_L \wedge q(1-R)R^{-1}h_{J}\\
\end{align*}
This proves part (i) of the Lemma, while the proof of parts (ii) and (iii) only involve  of   the following facts:
\ben
\item Lemma \ref{vect1} and Corollary \ref{vect2} imply that $M_R(\Omega \otimes \Omega)$ is dense in $
 \CF_-(\CH) \otimes \CF_-(\CH)$;
\item Lemma \ref{mod} (i) implies that $\CJ(\CD) = \CD$
\item A painful but not difficult case-by-case computation reveals that 
\[\CJ a_R(f) \CJ = (\Gamma \otimes \Gamma) b_R^*(f) \in M_R^\prime ~\forall f \]
\een
\end{proof}

\begin{Remark}\label{carem}
 Using the definition of $S_t$ and $\CJ$ , it easily follows that 
$S_t \CJ = \CJ S_t$ for all $ t \geq 0$, which implies that $\alpha_t $ is 
equi-modular endomorphism for every $t \geq 0 $ . So now we are in the perfect situation
to talk about the extendability of the {\bf CAR flow} and under the above assumptions
on $R$, we prove that {\bf CAR flows} are not extendable. 
\end{Remark}

Now our aim is to explicitly determine $( \alpha_t(M_R)' \cap M_R )(\Omega\otimes\Omega)$ for the CAR flow $ \alpha = \{ \alpha_t : t\geq 0 \} $.

\bigskip
Let $\CP$ and $\CF$ denote copies of $\N$ - where we wish to think of
$\CF$ and $\CP$ as signifying the future and past respectively. 
Let us write $f_i = s_th_i$, so $\{ f_j \}_{ j\in \CF} $ is an
orthonormal basis for  $L^2(t, \infty) \otimes \CK$.
Also consider an orthonormal basis $\{ e_i \}_{ i\in \CP } $ of $L^2(0, t)
\otimes \CK$.
Then clearly 
$\{ e_i \}_{ i\in \CP} \cup \{ f_j \}_{ j\in \CF } $ is an
orthonormal basis for  $L^2(0, \infty) \otimes \CK$.

Let $F(\CF)$ and $F(\CP)$  denote the collections of all finite ordered subsets 
of $\CF$ and $\CP$ respectively. Then 
$\CL = \{ v_{I_1J_1} \otimes qv_{I_2J_2}: I_1, I_2 \in F(\CP), J_1, J_2 \in F(\CF)\}$
is an orthonormal basis for $\CF_-(\CH) \otimes \CF_-(\CH)$,
where $ v_{IJ} = e_{i_1}\wedge e_{i_2}\wedge \cdots \wedge e_{i_n}\wedge f_{j_1}\wedge f_{j_2}\wedge \cdots \wedge f_{j_m}$,
with $I = \{i_1<i_2 \cdots <i_n \} \subset \CP$ and $J=\{j_1 < j_2  \cdots j_m \subset \CF\}$.

\bigskip
Now if $T \in \CB(\CH_R)$, we will be working with the expansion of
$T(\Omega \otimes \Omega)$  with respect to above orthonormal basis.
Let us fix an $l \in \CF$. We shall write
$T(\Omega \otimes \Omega)$  in the following fashion, paying special
attention to the occurrence or not of $l$ in the first and/or second
tensor factor: 
\begin{align}\label{pudec}
T(\Omega \otimes \Omega) &=
 \sum (p_{00}(I_1J_1,I_2J_2)~ v_{I_1J_1}\otimes qv_{I_2J_2}\nonumber\\
&~~~+ p_{11}(I_1J_1,I_2J_2)~f_l\wedge v_{I_1J_1}\otimes qf_l \wedge
 qv_{I_2J_2}) \nonumber \\
                 & + \sum u_{00}(I_1J_1,I_2J_2)~ v_{I_1J_1}\otimes qv_{I_2J_2}\nonumber \\
                 &  +\sum u_{10}(I_1J_1,I_2J_2)~ f_l \wedge v_{I_1J_1}\otimes qv_{I_2J_2}\nonumber \\
                &   +\sum u_{01}(I_1J_1,I_2J_2)~ v_{I_1J_1}\otimes
                qf_l \wedge qv_{I_2J_2}\nonumber  \\
&  +\sum u_{11}(I_1J_1,I_2J_2)~ f_l \wedge v_{I_1J_1}\otimes qf_l \wedge qv_{I_{2}J_{2}}.
\end{align}

\noindent
Here and in the sequel, it will be tacitly assumed that the sums range
over $((I_1J_1), (I_2,J_2)) \in (F(\CP) \times F_l(\CF))^2 $ - where we write $F_l(\CF) = F(\CF\setminus \{l\})$ - and
$ p_{mn},u_{mn}: \{(I_1J_1,I_2J_2):I_k \in F(\CP), J_l \in F_l(\CF)\} \rar \C, m,n \in \{0,1\}$ where it is demanded that
$spt(p_{00}) = spt(p_{11})$ and that $spt(p_{11})$,
$spt(u_{00})$,$spt(u_{10})$, $spt(u_{01})$ and
$spt(u_{11})$ are all disjoint sets - where we write $spt(f)$ for the
subset of its domain where the function $f$ is non-zero. When
necessary to show their dependence on the index $l$, we shall anoint
these functions with an appropriate superscript, as in: $p^l_{11}(IJ,I'J')$.

The letters $p$ and $u$ are meant to signify `paired' and `unpaired'.
Thus, suppose $l \in \CF$,  $I, L \in F(\CP)$, and 
$J, K \in F_l(\CF)$. If both  $v_{IJ} \otimes qv_{LK}$
and  $ f_l \wedge v_{IJ} \otimes qf_l\wedge qv_{LK}$ 
appear in the representation of $T(\Omega \otimes \Omega)$ with non-zero
coefficients, then we shall think of $(IJ,KL)$ as being {\em an
  $l$-paired ordered pair}.
Thus $spt(p_{00})= spt(p_{11})$ is the collection of $l$-paired
ordered pairs, while $\cup_{m,n=0}^1spt(u_{mn})$ is the collection of $l$-unpaired ordered pairs.

Note that in such an expression of $T (\Omega \otimes \Omega)$ with respect to
different $l$, the type of a summand may change but the coefficients
remain the same up 
to sign, since two vectors anti-commute under wedge product.
We also note that $T (\Omega \otimes \Omega)$ has been
written with respect to the basis
$\CL'=\{ v_{I_1J_1} \otimes qv_{I_2J_2}, f_l \wedge v_{I_1J_1} \otimes qv_{I_2J_2},
v_{I_1J_1} \otimes qf_l \wedge qv_{I_2J_2}, 
f_l \wedge v_{I_1J_1} \otimes qf_l \wedge qv_{I_2J_2}  : I_i \in F(\CP), J_r \in F_l(\CF) \}$. 
There are five types of sums in the representation of 
$T(\Omega\otimes\Omega)$. For simplicity of  notation, let us write:
\beast
&(i)&
\xi_T(p)=  \sum (p_{00}(I_1J_1,I_2J_2)~ v_{I_1J_1}\otimes qv_{I_2J_2}\\
 &~~~~& + ~p_{11}(I_1J_1,I_2J_2)~f_l\wedge v_{I_1J_1}\otimes qf_l \wedge
 qv_{I_2J_2})\\
&(ii)& 
\xi_T(u_{00}) =  \sum u_{00}(I_1J_1,I_2J_2)~ v_{I_1J_1}\otimes qv_{I_2J_2}\\
&(iii)& \xi_T(u_{10}) =        \sum u_{10}(I_1J_1,I_2J_2)~ f_l \wedge v_{I_1J_1}\otimes qv_{I_2J_2}\\
&(iv)& \xi_T(u_{01}) =      \sum u_{01}(I_1J_1,I_2J_2)~ v_{I_1J_1}\otimes qf_l \wedge qv_{I_2J_2}\\
&(v)&\xi_T(u_{11}) = \sum u_{11}(I_1J_1,I_2J_2)~ f_l \wedge v_{I_1J_1}\otimes qf_l \wedge qv_{I_{2}J_{2}},
\eeast
and $\CS = \{p, u_{00}, u_{10}, u_{01}, u_{11} \}$. So we 
have:
\[
 T(\Omega \otimes \Omega ) = \sum_{ x \in \CS} \xi_T (x). 
\]
We also write:
\beast
&(i)&A_1 = \frac {1} {(1-\lambda_l)^{1/2}} a_R(f_l),\\
&(ii)&A_2 = \frac {-1} {{\lambda_l}^{1/2}}  \Gamma\otimes \Gamma b_R(f_l)\\
&(iii)&B_1 = \frac {1} {\lambda_l^{1/2}} a^*_R (f_l),~~ \\
&(iv)&B_2 = \frac {-1} {(1-\lambda_l)^{1/2}} b^*_R (f_l) (\Gamma \otimes \Gamma)\\
\eeast

There is an implicit dependence in the definition of the $A_i$'s and
$B_i$'s of the preceding equations on the arbitrarily chosen $l \in
\CF$. When we wish to make this dependence explicit (as in Theorem
\ref{prdtcsy} below), we shall adopt the following notational device:
$\CA_l = \{  A_1 , A_2 \}$ and
$\CB_l = \{ B_1, B_2 \}$. We shall
frequently use the following facts in the sequel: 
\ben \item $R^{1/2} f_l =
R^{1/2} s_th_l = {\lambda_l}^{1/2} f_l$;
\item $(1-R)^{1/2} f_l = (1-R)^{1/2} s_th_l = {(1-\lambda_l)}^{1/2}
  f_l$; and
\item $f_l \otimes \Omega, \Omega \otimes f_l \in ran(S_t) ~\forall ~l
  \in \CF$.
\een

\begin{Theorem}\label{prdtcsy}
If $T \in \CB(\CH_R)$ satisfies  $A_1T (\Omega\otimes\Omega) = A_2T (\Omega\otimes\Omega)$
and $B_1T (\Omega\otimes\Omega) = B_2T (\Omega\otimes\Omega)$ for
$A_1, A_2 \in \CA_l, B_1, B_2 \in \CB_l$ and for all $l \in \CF$, then 
\[
 T \Omega \otimes \Omega \subset 
[\{ v_{I_1} \otimes qv_{I_2} :  I_1, I_2 \in F(\CP),  (-1)^{|I_1|} = (-1)^{|I_2|} \} ],
\]
where $[~]$ denotes span closure. 
\end{Theorem}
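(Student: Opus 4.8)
The plan is to fix an index $l\in\CF$, expand $T(\Omega\otimes\Omega)$ exactly as in (\ref{pudec}) against the basis $\CL'$, and convert the two hypotheses $(A_1-A_2)T(\Omega\otimes\Omega)=0$ and $(B_1-B_2)T(\Omega\otimes\Omega)=0$ into linear relations among the coefficient functions $p_{00},p_{11},u_{00},u_{10},u_{01},u_{11}$. First I would compute how each of $A_1,A_2,B_1,B_2$ acts on the four types of basis vector (those with $f_l$ in neither factor, in the first only, in the second only, and in both), using the explicit formulas for $a_R$ and $b_R$ from Lemma \ref{mod}, the eigenrelations $R^{1/2}f_l=\lambda_l^{1/2}f_l$ and $(1-R)^{1/2}f_l=(1-\lambda_l)^{1/2}f_l$, and the fact that $\Gamma$ multiplies a homogeneous vector by the sign of its particle number. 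The bookkeeping to keep straight is that $A_1,A_2$ either create $f_l$ in the first factor or annihilate it in the second while $B_1,B_2$ do the dual, and that all the scalars produced are of the form $(-1)^{|I_1|+|J_1|}$, $(-1)^{|I_2|+|J_2|}$, $r_l$ or $r_l^{-1}$, where $r_l=\lambda_l^{1/2}(1-\lambda_l)^{-1/2}$.

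Writing $a=(-1)^{|I_1|+|J_1|}$ and $b=(-1)^{|I_2|+|J_2|}$, I would then match the coefficient of each target basis vector in the two equations, using the convention that the supports of $p_{00}=p_{11}$, $u_{00}$, $u_{10}$, $u_{01}$, $u_{11}$ are disjoint. The decisive feature is that every scalar multiplying an unpaired function is nonzero precisely because $\lambda_l\neq\half$, i.e. $r_l\neq1$ and hence $r_l\pm r_l^{-1}\neq0$: matching the target vector with $f_l$ in the first factor only gives $(r_l+ab\,r_l^{-1})u_{11}=0$, so $u_{11}=0$; matching two further target vectors gives $u_{01}=0$ from the $A$-equation and $u_{10}=0$ from the $B$-equation. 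Thus, for each $l$, a surviving summand contains $l$ in \emph{both} tensor factors or in neither, and since this holds for every $l\in\CF$ each surviving basis vector has the shape $v_{A_1B}\otimes qv_{A_2B}$ with a common future set $B$. Restricting the same two equations to a paired index (where $u_{00}=u_{11}=0$) yields two relations between $p_{00}$ and $p_{11}$ whose joint solution forces $p_{11}=0$ when $a=b$ and gives the proportionality $p_{00}=-\frac{b}{2}(r_l-r_l^{-1})\,p_{11}$ when $a\neq b$; in particular a summand in which $l$ sits in both factors survives only if $(-1)^{|A_1|}\neq(-1)^{|A_2|}$.

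The final and hardest step is to remove every future index and fix the parity, and here the two equations at a single $l$ are \emph{not} enough: in the opposite-parity case they leave $p_{11}$ free. I would instead argue by contradiction using that the hypotheses hold for all $l\in\CF$. Suppose a summand $v_{A_1B}\otimes qv_{A_2B}$ survives with $B\neq\emptyset$. Peeling the elements of $B$ off one at a time through the paired proportionality above, each step only multiplies the coefficient by a nonzero constant, so one reaches a nonzero coefficient on a past-only vector $v_{A_1}\otimes qv_{A_2}$ with $(-1)^{|A_1|}\neq(-1)^{|A_2|}$. Fixing this root and now varying $l$ over all of $\CF$, for each $l$ the root has $f_l$ in neither factor and opposite parity, so the relation $(b-a)u_{00}=0$ forbids it from being unpaired; it must therefore be paired with $f_l\wedge v_{A_1}\otimes qf_l\wedge qv_{A_2}$, whose coefficient $p^l_{11}$ satisfies $|p^l_{11}|=2|p_{00}|\,|r_l-r_l^{-1}|^{-1}$. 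Since every $\lambda_l$ lies in $[\e,1-\e]$ the numbers $|r_l-r_l^{-1}|$ are bounded above, so these coefficients are bounded below by a fixed positive constant; as the vectors $f_l\wedge v_{A_1}\otimes qf_l\wedge qv_{A_2}$ are pairwise orthogonal for distinct $l$, this contradicts $\sum_l|p^l_{11}|^2\leq\|T(\Omega\otimes\Omega)\|^2<\infty$. The same square-summability argument applied directly to any past-only summand of opposite parity shows it vanishes as well. Hence no future index survives and each surviving past-only summand has $(-1)^{|A_1|}=(-1)^{|A_2|}$, which is exactly the asserted membership. I expect this $\ell^2$-argument — the need to exploit the infinitude of $\CF$ together with the uniform bound $\lambda_l\in[\e,1-\e]$, rather than a single-$l$ computation — to be the genuine obstacle.
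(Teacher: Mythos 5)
Your proposal is correct and takes essentially the same route as the paper's own proof: the same expansion (\ref{pudec}) with disjoint supports, the same coefficient relations (the paper's Lemma \ref{coeff}) killing $u_{10},u_{01},u_{11}$ because $\lambda_l\neq\half$, the same parity dichotomy (Remark \ref{l0}), and the same use of square-summability over the infinite set $\CF$ combined with the uniform bounds from $\lambda_l\in[\e,1-\e]$ --- which the paper phrases as the Fourier coefficients $p^{l_n}_{11}$ tending to zero in Lemma \ref{prd1}, and you phrase as a uniform lower bound contradicting $\sum_l|p^l_{11}|^2<\infty$. The only difference is the order of the endgame (the paper shows each surviving term is unpaired for some $l_0$, hence of equal parity, and then kills all paired coefficients, as in Lemmas \ref{prd2}--\ref{prd4}, whereas you peel future indices down to an opposite-parity past-only root and derive the $\ell^2$ contradiction there), which is a reorganization rather than a genuinely different argument.
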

\bigskip
We start with a $T \in \CB(\CH_R) $, which satisfies the hypothesis of the 
above Theorem \ref{prdtcsy} and write $ T(\Omega \otimes \Omega) $ as
in \ref{pudec} , for an arbitrary choice of index $l$. Then we
go through the following Lemmas and prove that 
the coefficient functions $ p_{00}, p_{11}, u_{10} , u_{01}, u_{11} $ are 
identically zero, while the support of $u_{00}$ is contained in the set 
$\{(I_1J_1,I_2J_2): J_1 \cup J_2 = \emptyset , (-1)^{|I_1|} =
(-1)^{|I_2|} \}$. The truth of this assertion for all choices of $l$
will prove our Theorem \ref{prdtcsy}.  We go through the following Lemmas regarding
the representation of $T(\Omega \otimes \Omega) $  whose proofs elementary and simle. We 
may sometime omit the details.

\begin{Lemma}\label{xiT}
Let $\eta(x)$ (resp., $\eta(y)$)  be a summand\footnote{By a summand of $\xi_T(p)$ we shall mean a `paired term' of
  the form $(p_{00}(I_1J_1,I_2J_2)~ v_{I_1J_1}\otimes qv_{I_2J_2}
  + ~p_{11}(I_1J_1,I_2J_2)~f_l\wedge v_{I_1J_1}\otimes qf_l \wedge
 qv_{I_2J_2})$ rather than an individual term of such a pair}
 of the sum $\xi_T(x)$ (resp, $\eta(y)$), where $ x,y \in \CS$. Then 
$\langle \eta(x), \eta(y) \rangle = 0 $ implies that 
$\langle X\eta(x), Y\eta(y) \rangle = 0 $, 
for all $x, y \in \CS$ and 
$X,Y \in \CA$ or $X,Y \in \CB$.
\end{Lemma}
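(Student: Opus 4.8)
The plan is to reduce the whole statement to one structural fact about how the operators in $\CA = \CA_l$ and $\CB = \CB_l$ move the single mode $f_l$, and then read off the conclusion from the disjointness of supports already built into the decomposition \ref{pudec}.

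First I would rewrite the four operators in terms of the elementary Fock operators. Using the eigen-relations $R^{1/2}f_l=\lambda_l^{1/2}f_l$ and $(1-R)^{1/2}f_l=(1-\lambda_l)^{1/2}f_l$ recorded before the theorem, together with $\Gamma^2=1$ and $\Gamma a(g)=-a(g)\Gamma$, a direct computation gives
\[
A_1 = a(f_l)\otimes\Gamma + \tfrac{\lambda_l^{1/2}}{(1-\lambda_l)^{1/2}}\,1\otimes a^*(qf_l),\qquad
A_2 = a(f_l)\Gamma\otimes 1 - \tfrac{(1-\lambda_l)^{1/2}}{\lambda_l^{1/2}}\,\Gamma\otimes a^*(qf_l)\Gamma,
\]
and similarly $B_1,B_2$ are real combinations of $a^*(f_l)\otimes(\cdots)$ and $(\cdots)\otimes a(qf_l)$. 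The point I want to extract is that every $X\in\CA$ acts only by (i) removing one copy of $f_l$ from the first Fock factor or (ii) inserting one copy of $qf_l$ into the second, while every $X\in\CB$ acts only by inserting $f_l$ into the first factor or removing $qf_l$ from the second; the $\Gamma$'s occurring are diagonal in the occupation-number basis and contribute only a sign $(-1)^{N}$.

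Next I would record the decisive consequence. Write each vector of the refined basis $\CL'$ as carrying two occupation bits $a,b\in\{0,1\}$ (whether $f_l$, resp. $qf_l$, is wedged into the first, resp. second, factor) together with an \emph{underlying index} $\iota=(I_1J_1,I_2J_2)$, where $J_1,J_2\in F_l(\CF)$ do not contain $l$. Since toggling the $f_l$-slot (resp. $qf_l$-slot) leaves $v_{I_1J_1}$ (resp. $qv_{I_2J_2}$) untouched and the $\Gamma$'s only change signs, every operator in $\CA\cup\CB$ maps a basis vector into the span of basis vectors with the \emph{same} underlying index; equivalently each such operator leaves invariant the fibre $\CH_\iota=[\{w\in\CL': \text{underlying index } \iota\}]$. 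Note also that a single summand $\eta(x)$ of $\xi_T(x)$ lives entirely in one fibre: for $x=u_{mn}$ it is a single basis vector, and for $x=p$ it is the type-$(0,0)$/type-$(1,1)$ pair sharing one $(I_1J_1,I_2J_2)$.

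Then I would show that the hypothesis $\langle\eta(x),\eta(y)\rangle=0$, with $\eta(x),\eta(y)\neq 0$, forces $\iota_x\neq\iota_y$. Indeed, if $\iota_x=\iota_y$ and $x\neq y$, this common index would lie in $spt(x)\cap spt(y)$, contradicting the disjointness of $spt(p),spt(u_{00}),spt(u_{10}),spt(u_{01}),spt(u_{11})$ imposed after \ref{pudec}; and if $\iota_x=\iota_y$ with $x=y$, the summand attached to a given index is unique, so $\eta(x)=\eta(y)$ and $\langle\eta(x),\eta(y)\rangle=\|\eta(x)\|^2>0$, again a contradiction. Hence $\iota_x\neq\iota_y$, and since $X,Y$ (both in $\CA$ or both in $\CB$) preserve fibres, $X\eta(x)\in\CH_{\iota_x}$ and $Y\eta(y)\in\CH_{\iota_y}$ with $\CH_{\iota_x}\perp\CH_{\iota_y}$, whence $\langle X\eta(x),Y\eta(y)\rangle=0$. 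The only real work is the bookkeeping in the first step — expanding $A_1,A_2,B_1,B_2$ and checking that, once the $\Gamma$'s are absorbed into signs, nothing but the two $f_l/qf_l$ occupation bits is altered. After fibre-preservation is established, the rest is purely the combinatorics of the disjoint supports in \ref{pudec}, in keeping with the remark that these proofs are elementary.
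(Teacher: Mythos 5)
Your proof is correct and takes essentially the same route as the paper's: the paper's one-line proof invokes exactly (i) $spt(p_{00})=spt(p_{11})$, (ii) the pairwise disjointness of the supports, and (iii) ``the definition of the action of $X,Y$ on $\eta(x)$'', and your argument is precisely a careful write-up of (iii) --- the expansion of $A_1,A_2,B_1,B_2$ showing they only toggle the $f_l$/$qf_l$ occupation bits and hence preserve each fibre $\CH_\iota$ --- combined with (i)--(ii) to force distinct underlying indices for orthogonal summands. The only difference is the level of detail: you make explicit the bookkeeping the paper leaves implicit.
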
 
\begin{proof}
This follows from (i) the assumptions that $spt(p_{00}) =
spt(p_{11})$, 
(ii) $sp(p_{11})$,
$spt(u_{00})$,$spt(u_{10})$, $spt(u_{01})$ and
$spt(u_{11})$ are all disjoint sets and (iii) the definition of the action of $X, Y$ on $\eta(x)$.
\end{proof}

\begin{Lemma}\label{xiT1}
If~ $A_1 T(\Omega \otimes \Omega ) = A_2 T(\Omega \otimes \Omega )$,~ then 
$A_1 \xi_T (x) = A_2 \xi_T (x) $ for all $x \in \CS$. Similarly 
if ~ $B_1 T(\Omega \otimes \Omega ) = B_2 T(\Omega \otimes \Omega )$, then 
$B_1 \xi_T (x) = B_2 \xi_T (x) $ for all $x \in \CS$. 
\end{Lemma}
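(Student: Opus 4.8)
The plan is to reduce everything to the orthogonality transport already encoded in Lemma \ref{xiT}, after which the statement follows from the elementary fact that a finite sum of pairwise orthogonal vectors vanishes iff each summand vanishes. I would treat the $A$-assertion; the $B$-assertion is literally the same argument with $\CB_l$ in place of $\CA_l$.

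First I would rewrite the hypothesis. Since $T(\Omega\otimes\Omega)=\sum_{x\in\CS}\xi_T(x)$, the assumption $A_1T(\Omega\otimes\Omega)=A_2T(\Omega\otimes\Omega)$ reads
\[
\sum_{x\in\CS}(A_1-A_2)\xi_T(x)=0 .
\]
Hence it suffices to prove that the five vectors $\{(A_1-A_2)\xi_T(x):x\in\CS\}$ are pairwise orthogonal; for then $0=\big\|\sum_{x}(A_1-A_2)\xi_T(x)\big\|^2=\sum_{x}\|(A_1-A_2)\xi_T(x)\|^2$, forcing $(A_1-A_2)\xi_T(x)=0$, i.e. $A_1\xi_T(x)=A_2\xi_T(x)$, for every $x$. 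To establish the orthogonality, fix $x\neq y$ in $\CS$. Bilinearity expands $\langle (A_1-A_2)\xi_T(x),(A_1-A_2)\xi_T(y)\rangle$ into the four inner products $\langle X\xi_T(x),Y\xi_T(y)\rangle$ (up to signs) with $X,Y\in\CA_l$, and expanding each $\xi_T(x),\xi_T(y)$ into its summands reduces everything to terms $\langle X\eta(x),Y\eta(y)\rangle$. The decisive observation is that whenever $x\neq y$ every summand $\eta(x)$ of $\xi_T(x)$ is orthogonal to every summand $\eta(y)$ of $\xi_T(y)$, so that $\langle\eta(x),\eta(y)\rangle=0$ and Lemma \ref{xiT} immediately gives $\langle X\eta(x),Y\eta(y)\rangle=0$. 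Summing, all four cross terms vanish, yielding the claimed orthogonality.

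The one step that genuinely needs care — and the place where the elaborate bookkeeping set up before Theorem \ref{prdtcsy} is indispensable — is verifying the hypothesis $\langle\eta(x),\eta(y)\rangle=0$ of Lemma \ref{xiT} for all cross pairs. For most pairs of distinct types this is automatic: the summands differ in the $f_l$-occupation of the first and/or second tensor factor, so the underlying basis vectors are orthogonal. The only delicate comparisons are those involving the paired sum $\xi_T(p)$, whose summands carry a $u_{00}$-type part and a $u_{11}$-type part simultaneously: the $u_{00}$-part of a $p$-summand against a $u_{00}$-summand, and the $u_{11}$-part of a $p$-summand against a $u_{11}$-summand, have the same occupation pattern and can only be separated by the hypotheses $spt(p_{00})=spt(p_{11})$ together with the mutual disjointness of $spt(p_{11}),spt(u_{00}),spt(u_{10}),spt(u_{01}),spt(u_{11})$. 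Once this orthogonality is confirmed, Lemma \ref{xiT} performs the rest of the work and no explicit computation with the operators $A_1,A_2$ (beyond their membership in $\CA_l$) is required.
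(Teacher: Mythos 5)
Your proposal is correct and is essentially the paper's own argument: the paper's proof is exactly the Pythagorean identity $\|(A_1-A_2)T(\Omega\otimes\Omega)\|^2=\sum_{x\in\CS}\|(A_1-A_2)\xi_T(x)\|^2$, justified by the pairwise orthogonality supplied by Lemma \ref{xiT}, which is the same reduction you carry out. You merely spell out the two steps the paper leaves implicit — checking the hypothesis $\langle\eta(x),\eta(y)\rangle=0$ for cross pairs via the disjoint-support assumptions, and then invoking orthogonality to force each $(A_1-A_2)\xi_T(x)$ to vanish.
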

\begin{proof}
This follows from
\[\|(A_1 - A_2) T(\Omega \otimes \Omega)\|^2 = \sum_{x \in \CS} \|(A_1
- A_2) \xi_T(x)\|^2 ~,\]
which is a consequence of Lemma \ref{xiT}.
\end{proof}

Now onwards we assume that  $T$ satisfies the hypothesis of 
the Theorem \ref{prdtcsy} and with the foregoing notations we have the following Lemma regarding the 
coefficients of the representation of $T(\Omega\otimes\Omega)$.

\begin{Lemma}\label{coeff} $p_{00},~p_{11},~u_{00},~u_{10},~u_{01},~u_{11} $ satisfy
the following equations:
\beast
&(i)&\sigma (I_2,J_2) p_{00}(I_1J_1,I_2J_2) +   p_{11}(I_1J_1,I_2J_2)\frac {\lambda_l^{1/2}} {(1-\lambda_l)^{1/2}}\nonumber\\
 &=&  \sigma (I_1,J_1)  p_{00}(I_1J_1,I_2J_2) +  \rho (I_1J_1, I_2J_2)   p_{11}(I_1J_1,I_2J_2) \frac {(1-\lambda_l)^{1/2}} {\lambda_l^{1/2}},\\
&(ii)&
\sigma (I_2,J_2)  u_{00}(I_1J_1,I_2J_2) = \sigma (I_1,J_1)    u_{00}(I_1J_1,I_2J_2),\\
&(iii)&
{u_{01}(I_1J_1,I_2J_2)} \frac {\lambda_l^{1/2} } {(1-\lambda_l)^{1/2}} = \star u_{01}(I_1J_1,I_2J_2) \frac {(1-\lambda_l^{1/2}} {{\lambda_l})^{1/2}},\\
&(iv)&
\star u_{11}(I_1J_1,I_2J_2)\frac {\lambda_l^{1/2}} {(1-\lambda_l)^{1/2}} = \star u_{11}(I_1J_1,I_2J_2)\frac {(1-\lambda_l)^{1/2}} {{\lambda_l}^{1/2}},\\
&(v)& \star \frac {(1-\lambda_l)^{1/2}} {\lambda_l^{1/2}}u_{10}(I_1J_1,I_2J_2)
= \star \frac {\lambda_l^{1/2}} {(1-\lambda_l)^{1/2}} u_{10}(I_1J_1,I_2J_2)
\eeast
where $ \sigma : F(\CP) \times F(\CF\} \rar \{1, -1\}$ is defined by
$\sigma (I,J) = (-1)^{|I|+ |J|}$, $ \rho : \{(I_1J_1,I_2J_2):I_k \in F(\CP), J_l \in F(\CF) \} \rar \{1,-1\}$,
defined by $\rho (I_1J_1, I_2J_2) = (-1)^{|I_1|+ |J_1|+ |I_2|+ |J_2|}$, and   
$ \star = \pm 1$. 
\end{Lemma}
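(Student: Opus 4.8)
The plan is to reduce the two hypotheses to a single summand and then read off the five identities by applying the four operators and matching coefficients on the resulting basis vectors.

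First I would invoke Lemma~\ref{xiT1} to split the hypotheses $A_1T(\Omega\otimes\Omega)=A_2T(\Omega\otimes\Omega)$ and $B_1T(\Omega\otimes\Omega)=B_2T(\Omega\otimes\Omega)$ into the separate equalities $A_1\xi_T(x)=A_2\xi_T(x)$ and $B_1\xi_T(x)=B_2\xi_T(x)$ for every $x\in\CS$. Moreover, since distinct basis vectors $v_{I_1J_1}\otimes qv_{I_2J_2}$ are sent by each of $A_1,A_2,B_1,B_2$ into mutually orthogonal vectors (this is the orthogonality packaged in Lemma~\ref{xiT}), the equalities actually hold summand by summand: for each $x\in\CS$ and each admissible pair $(I_1J_1,I_2J_2)$ the term $\eta(x)$ satisfies $A_1\eta(x)=A_2\eta(x)$ and $B_1\eta(x)=B_2\eta(x)$. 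This is what turns the statement into a finite, local computation for each term type.

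Next I would make the operators explicit on the fixed mode $f_l$. Feeding $R^{1/2}f_l=\lambda_l^{1/2}f_l$ and $(1-R)^{1/2}f_l=(1-\lambda_l)^{1/2}f_l$ into the defining formulas for $a_R,a_R^*$ and into $b_R(h)=a(R^{1/2}h)\otimes\Gamma-1\otimes a^*(q(1-R)^{1/2}h)$ from Lemma~\ref{mod}, each of $A_1=\frac{1}{(1-\lambda_l)^{1/2}}a_R(f_l)$, $A_2=\frac{-1}{\lambda_l^{1/2}}\Gamma\otimes\Gamma\,b_R(f_l)$, $B_1=\frac{1}{\lambda_l^{1/2}}a_R^*(f_l)$, $B_2=\frac{-1}{(1-\lambda_l)^{1/2}}b_R^*(f_l)(\Gamma\otimes\Gamma)$ becomes an explicit combination of $a(f_l)\otimes\Gamma$, $1\otimes a^*(qf_l)$, $a^*(f_l)\otimes\Gamma$ and $1\otimes a(qf_l)$. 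I would apply these to the two basis vectors $v_{I_1J_1}\otimes qv_{I_2J_2}$ and $f_l\wedge v_{I_1J_1}\otimes qf_l\wedge qv_{I_2J_2}$, using three facts: the CAR relations (so $a(f_l)$ deletes $f_l$ and annihilates a monomial not containing it, $a^*(f_l)$ adjoins $f_l$ and kills a monomial already containing it, and similarly for $a(qf_l),a^*(qf_l)$ in the second leg), the identity $\Gamma=(-1)^{N}$ on the number operator $N$, and the anticommutation signs incurred when moving $f_l$ or $qf_l$ to the front of a monomial. Since $l\notin J_1,J_2$, these signs collapse to the parities $\sigma(I,J)=(-1)^{|I|+|J|}$ and $\rho=(-1)^{|I_1|+|J_1|+|I_2|+|J_2|}$.

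The organizing observation is that $A_1,A_2$ raise the $qf_l$-occupancy of the second leg (equivalently lower the $f_l$-occupancy of the first), so both send a paired term $\eta(p)$ into a scalar multiple of the single vector $v_{I_1J_1}\otimes qf_l\wedge qv_{I_2J_2}$, while $B_1,B_2$ send it into a multiple of $f_l\wedge v_{I_1J_1}\otimes qv_{I_2J_2}$; equating those scalars gives the relation~(i) between $p_{00}$ and $p_{11}$ (the companion equality $B_1\eta(p)=B_2\eta(p)$ supplies a second such relation). The same computation applied to the unpaired summands yields (ii) from $A_1\eta(u_{00})=A_2\eta(u_{00})$ and (iii)--(v) from the corresponding equalities for $\eta(u_{01}),\eta(u_{11}),\eta(u_{10})$: in each unpaired case exactly one basis vector appears on each side, so equating coefficients gives an identity of the displayed shape, the $\lambda_l$-ratios $\frac{\lambda_l^{1/2}}{(1-\lambda_l)^{1/2}}$ and $\frac{(1-\lambda_l)^{1/2}}{\lambda_l^{1/2}}$ arising because $A_i$ and $B_i$ carry the complementary normalizations $(1-\lambda_l)^{\pm1/2}$ and $\lambda_l^{\pm1/2}$. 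I expect the main obstacle to be precisely the sign accounting: combining the $\Gamma$-parities with the anticommutation signs to land on the correct $\sigma$, $\rho$ and residual $\star=\pm1$. It is worth stressing that the exact value of $\star$ is immaterial --- in (iii)--(v) the two $\lambda_l$-ratios are unequal whenever $\lambda_l\neq\frac12$, so whatever sign $\star$ takes, the relation will force the corresponding coefficient to vanish at the next step --- which is why I would leave $\star$ undetermined rather than grind it out term by term.
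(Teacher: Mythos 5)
Your overall route is the same as the paper's: use Lemma \ref{xiT1} (backed by the orthogonality in Lemma \ref{xiT}) to localize the hypotheses to the equalities $A_1\xi_T(x)=A_2\xi_T(x)$ and $B_1\xi_T(x)=B_2\xi_T(x)$ for each $x\in\CS$, then compute the four operators on basis vectors using $R^{1/2}f_l=\lambda_l^{1/2}f_l$, $(1-R)^{1/2}f_l=(1-\lambda_l)^{1/2}f_l$, the CAR relations and the $\Gamma$-parities, and compare coefficients. But one of your bookkeeping claims conflicts with the paper's conventions and another is simply false, and the false one hides the only structural subtlety in this lemma. On conventions: the paper declares $a(f)$ to be the \emph{creation} operator (so $a^*(f)$ annihilates), the reverse of the standard convention you adopt (``$a(f_l)$ deletes $f_l$''). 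Under the paper's convention it is $A_1,A_2$ that send a paired summand $\eta(p)$ to multiples of $f_l\wedge v_{I_1J_1}\otimes qv_{I_2J_2}$, and $B_1,B_2$ that produce $v_{I_1J_1}\otimes qf_l\wedge qv_{I_2J_2}$ --- the opposite of what you assert. This is not purely cosmetic for the statement being proved: with your convention, the coefficient that picks up the ratios $\lambda_l^{1/2}/(1-\lambda_l)^{1/2}$, $(1-\lambda_l)^{1/2}/\lambda_l^{1/2}$ and the coefficient that picks up the signs $\sigma,\rho$ trade places, so your computation would yield relations with the roles of $p_{00}$ and $p_{11}$ interchanged, rather than equation (i) as stated. (The downstream use in Remark \ref{l0} and Lemma \ref{prd3} would survive either way, but the lemma itself would not be what you proved.)

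The second problem is your organizing claim that ``in each unpaired case exactly one basis vector appears on each side.'' This fails for two of the four unpaired types. Under the paper's convention, both $A_1$ and $A_2$ kill every term of $\xi_T(u_{10})$ outright: creation of $f_l$ in the first leg is blocked because $f_l$ is already present, and annihilation of $qf_l$ in the second leg gives zero because $qf_l$ is absent. Hence the $A$-equality on $u_{10}$ reads $0=0$ and yields no information; equation (v) must be extracted from $B_1\xi_T(u_{10})=B_2\xi_T(u_{10})$, which is exactly what the paper does, and is precisely why the hypothesis of the lemma carries both the $\CA_l$- and the $\CB_l$-conditions. (Dually, each $A_i$ maps a term of $\xi_T(u_{01})$ to a combination of \emph{two} distinct basis vectors, and (iii) comes from matching the coefficient of just one of them.) Your argument is not irreparably broken, since you did establish both families of equalities at the outset; but as written, deriving (v) ``from the corresponding equality'' for $\eta(u_{10})$ would fail if that means the $A$-equality, and the reader is never told that a switch to the $B$'s is forced there.
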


\begin{proof}

$T$ satisfies $A_1 (T\Omega\otimes \Omega) =A_2 T(\Omega\otimes \Omega )$. So from the 
Lemma \ref{xiT1}, we have $A_1 \xi_T (x) = A_2 \xi_T(x)$ foll $x \in \CS$.
Now for every $x\in \CS$, we  separately compute $A_1 \xi_T (x)$ and $A_2 \xi_T(x)$ and 
compare their coefficients.

\bigskip \noindent $(i)$  If $A_1 \xi_T (p) = A_2 \xi_T(p)$,
observe that 
\begin{align*}
A_1 \xi_T (p) &=\frac {1} {(1-\lambda_l)^{1/2}} a_R (f_l) \xi_T(p) \\
&=\sum (\frac {p_{00}(I_1J_1,I_2J_2)} {(1-\lambda_l)^{1/2}} a_R (f_l) v_{I_1J_1}\otimes qv_{I_2J_2}\\
& + \frac {  p_{11}(I_1J_1,I_2J_2)} {(1-\lambda_l)^{1/2}} a_R (f_l) 
f_l\wedge v_{I_1J_1}\otimes qf_l \wedge qv_{I_2J_2})\\ 
&=\sum (\sigma (I_2, J_2) p_{00}(I_1J_1,I_2J_2\\
&~~~+~p_{11}(I_1J_1,I_2J_2) \frac {\lambda_l^{1/2}} {(1-\lambda_l)^{1/2}} ) f_l\wedge v_{I_1J_1}\otimes qv_{I_2J_2}
\end{align*}
while
\begin{align*}
&A_2 \xi_T(p)\\
&= \frac {-1} {{\lambda_l}^{1/2}}  \Gamma\otimes \Gamma b(f_l) \xi_T(p)  \\
&=\sum (p_{00}(I_1J_1,I_2J_2)\frac {-1} {{\lambda_l}^{1/2}} 
\Gamma\otimes \Gamma b(f_l) v_{I_1J_1}\otimes qv_{I_2J_2}\\
& +   p_{11}(I_1J_1,I_2J_2) \frac {-1} {{\lambda_l}^{1/2}} \Gamma\otimes \Gamma b(f_l)f_l\wedge v_{I_1J_1}\otimes qf_l \wedge qv_{I_2J_2}\\
&= \sum ( \sigma (I_1, J_1) p_{00}(I_1J_1,I_2J_2)\\
&~~+ \rho (I_1J_1 , I_2J_2 )   p_{11}(I_1J_1,I_2J_2) 
\frac {(1-\lambda_l)^{1/2}} {\lambda_l^{1/2}} ) f_l\wedge v_{I_1J_1}\otimes  qv_{I_2J_2}\\
\end{align*}
and $(i)$ follows upon comparing coefficients in the two equations above.

\bigskip \noindent Equations $(ii)$, $(iii)$ and $(iv)$ are proved by arguing exactly as for $(ii)$ above.

\bigskip \noindent As for (v), we also have $ B_1 T(\Omega \otimes \Omega ) = B_2 T(\Omega \otimes \Omega )$. So from
Lemma \ref{xiT1}, we have $B_1 \xi_T(x) =B_2 \xi_T(x)$ for all $x\in \CS$. 
In particular we have $B_1 \xi_T(u_{10}) =B_2 \xi_T(u_{10})$. Then $v$ follows from the comparing coefficients
of $B_1 \xi_T(u_{10})$  and $B_2 \xi_T(u_{10})$.

\end{proof}

With foregoing notation and the assumptions on $T$, we have the following 
Corollary.
\begin{Corollary}\label{coeffz}
If we represent $T(\Omega \otimes \Omega ) $ las in eqn. \ref{pudec}, 
then 
\[
u_{01}= u_{11}= u_{10} = 0.
\]
That is the functions $u_{01}$,~$u_{11}$ and $u_{10}$ are identically zero.
\end{Corollary}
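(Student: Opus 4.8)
The plan is to read off the three vanishings directly from equations $(iii)$, $(iv)$ and $(v)$ of Lemma \ref{coeff}, the only real input being the hypothesis $\lambda_l \neq \half$ from assumption (4). Fix an arbitrary $l \in \CF$ and abbreviate
\[
c_1 = \frac{\lambda_l^{1/2}}{(1-\lambda_l)^{1/2}}, \qquad c_2 = \frac{(1-\lambda_l)^{1/2}}{\lambda_l^{1/2}}.
\]
Since $\e \leq \lambda_l \leq 1-\e$, both $c_1$ and $c_2$ are strictly positive; and since $\lambda_l \neq \half$, we have $\frac{\lambda_l}{1-\lambda_l} \neq 1$, hence $c_1 \neq c_2$. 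These two elementary facts ($c_1 \neq c_2$ and $c_1 + c_2 > 0$) are all that is needed.

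The observation driving the proof is that each of equations $(iii)$, $(iv)$, $(v)$ has the shape
\[
\star_1\, c_a\, u(I_1J_1,I_2J_2) = \star_2\, c_b\, u(I_1J_1,I_2J_2),
\]
where $\{c_a, c_b\} = \{c_1, c_2\}$, each $\star_i \in \{+1,-1\}$, and $u$ is $u_{01}$, $u_{11}$ or $u_{10}$ respectively. Rearranging gives $(\star_1 c_a - \star_2 c_b)\, u(I_1J_1,I_2J_2) = 0$, so it suffices to check that the scalar $\star_1 c_a - \star_2 c_b$ never vanishes. I would split into two cases on the relative sign: if $\star_1 = \star_2$ the scalar equals $\pm(c_1 - c_2)$, which is nonzero because $c_1 \neq c_2$; if $\star_1 = -\star_2$ it equals $\pm(c_1 + c_2)$, which is nonzero because $c_1, c_2 > 0$. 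In either case the coefficient function is forced to be zero at every index tuple, so $u_{01} = u_{11} = u_{10} = 0$ for this $l$. Letting $l \in \CF$ range over all of $\CF$ then yields the Corollary.

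I do not expect any genuine obstacle here; the argument is purely algebraic. The single point requiring care is that the signs $\star$ in Lemma \ref{coeff} are left unspecified, which is precisely why the case split on $\star_1 = \pm\star_2$ is used, together with \emph{both} properties $c_1 \neq c_2$ and $c_1 + c_2 > 0$. It is worth emphasising that the hypothesis $\lambda_l \neq \half$ is indispensable: it is exactly the condition guaranteeing $c_1 \neq c_2$, and hence the mechanism by which the ``unpaired'' coefficients are annihilated.
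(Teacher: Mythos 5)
Your proof is correct and follows essentially the same route as the paper, which simply declares the vanishing of $u_{01}$, $u_{11}$, $u_{10}$ an immediate consequence of Lemma \ref{coeff}(iii)--(v) together with the assumption $\lambda_l \neq \half$. Your explicit case split on the signs $\star$, using $c_1 \neq c_2$ and $c_1 + c_2 > 0$, is just a careful spelling-out of that same one-line argument.
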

\begin{proof}
This is an immediate consequence of Lemma \ref{coeff}(iii), \ref{coeff}(iv) and \ref{coeff}(v) and the assumption that $\lambda_l \neq 1/2 ~\forall l$.
\end{proof}

 We continue to assume that an operator $T \in \CB(\CH_R)$ satisfies the hypothesis of the Theorem \ref{prdtcsy} and proceed to analyse the representation 
 of $T(\Omega\otimes \Omega)$ as in eqn. (\ref{pudec}). 
     
\begin{Remark}\label{l0}
\ben
\item Lemma \ref{coeff}(i)
 implies that if $(IJ,KL)$ are $l$-paired, then $\sigma(I,J) \neq \sigma(K,L)$. ({\em Reason:} Otherwise, since $\rho(IJ,KL) = \pm 1$, and $|p_{11}(IJ.KL)| \neq 0$, we must have $\lambda_l = \half$.) 
\item Lemma \ref{coeff}(ii) implies that 
if $u_{00}(I_1J_1,I_2J_2) \neq 0$, then $\sigma (I_2,J_2) = \sigma (I_1,J_1)$, i.e.
$(-1)^{|I_1| + |J_1| } =  (-1)^{|I_2| + |J_2| }$.
\een
\end{Remark}

Now we wish to compare the representations of $T(\Omega \otimes \Omega)$ for different $l$'s.

\begin{Lemma}\label{prd1}
 Let $I,K \in F(\CP)$ and $J, L \in F(\CF)$. If  
a term of the form $v_{IJ} \otimes qv_{KL}$ appears in 
$T(\Omega \otimes \Omega)$ with non-zero coefficient, then  $(IJ,KL)$
can be {\bf $w$-paired} for at most finitely many $w \in \CF$ with $w \notin J\cup L$.
\end{Lemma}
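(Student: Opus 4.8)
The plan is to fix a term $v_{IJ}\otimes qv_{KL}$ occurring in $T(\Omega\otimes\Omega)$ with nonzero coefficient and show that $(IJ,KL)$ can be $w$-paired for only finitely many $w\in\CF\setminus(J\cup L)$. The key observation comes from Remark~\ref{l0}(1): if $(IJ,KL)$ is $w$-paired, then the companion paired term $f_w\wedge v_{IJ}\otimes qf_w\wedge qv_{KL}$ also appears in $T(\Omega\otimes\Omega)$ with a nonzero coefficient $p^w_{11}(IJ,KL)$. Since these companion vectors are, for distinct $w$'s, \emph{mutually orthogonal} basis vectors in $\CF_-(\CH)\otimes\CF_-(\CH)$ (they differ in the future index $w$ that is wedged in on both tensor factors), each distinct $w$ for which $(IJ,KL)$ is $w$-paired contributes an orthogonal summand to the fixed vector $T(\Omega\otimes\Omega)$.

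\emph{The main step} is then a square-summability argument. Because $T(\Omega\otimes\Omega)$ is a fixed vector of finite norm in $\CH_R$, its expansion in the orthonormal basis $\CL$ has square-summable coefficients. Each $w$-pairing (for $w\notin J\cup L$) forces $p^w_{11}(IJ,KL)\neq 0$ attached to a distinct basis vector $f_w\wedge v_{IJ}\otimes qf_w\wedge qv_{KL}$. Crucially, Lemma~\ref{coeff}(i) pins down the magnitude: solving that equation for $p^w_{11}$ in terms of $p^w_{00}$ and using $\sigma(I,J)\neq\sigma(K,L)$ (so the difference of the two $\sigma$-terms is $\pm 2$) gives $|p^w_{11}(IJ,KL)|$ bounded below by a fixed positive multiple of $|p^w_{00}(IJ,KL)|$ — and $p^w_{00}(IJ,KL)$ is, up to sign, just the fixed nonzero coefficient of $v_{IJ}\otimes qv_{KL}$, which does not depend on $w$ (by the sign-only dependence noted after eqn.~\ref{pudec}). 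Hence each $w$-pairing contributes an orthogonal summand of norm bounded below by a fixed positive constant $\delta>0$.

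\emph{The conclusion} is immediate: if $(IJ,KL)$ were $w$-paired for infinitely many $w\in\CF\setminus(J\cup L)$, then $T(\Omega\otimes\Omega)$ would contain infinitely many mutually orthogonal summands each of norm at least $\delta$, forcing $\|T(\Omega\otimes\Omega)\|=\infty$, a contradiction since $T\in\CB(\CH_R)$ and $\Omega\otimes\Omega$ is a unit vector.

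\emph{The hard part} I expect is not the square-summability estimate itself but verifying carefully that the coefficient $p^w_{00}(IJ,KL)$ really equals (up to sign) the coefficient of the fixed term $v_{IJ}\otimes qv_{KL}$ independently of $w$, and that the factor relating $p^w_{11}$ to $p^w_{00}$ in Lemma~\ref{coeff}(i) is uniformly bounded below in $w$. Since $\lambda_l\in[\e,1-\e]\setminus\{\half\}$, the ratios $\lambda_l^{1/2}/(1-\lambda_l)^{1/2}$ are confined to a compact subset of $(0,\infty)$ bounded away from $1$, so the coefficient $(\sigma(K,L)-\sigma(I,J))=\pm2$ dominates and yields a uniform lower bound $\delta$; this is where the hypotheses $\e\le R\le 1-\e$ and $\lambda_i\neq\half$ are essential.
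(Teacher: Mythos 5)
Your proof is correct and is essentially the paper's own argument in contrapositive form: both rest on Lemma \ref{coeff}(i) combined with Remark \ref{l0}(1), the mutual orthogonality of the paired basis vectors $f_w\wedge v_{IJ}\otimes qf_w\wedge qv_{KL}$ for distinct $w$, and the boundedness of the ratios $\lambda_w^{1/2}/(1-\lambda_w)^{1/2}$ coming from $\e\le R\le 1-\e$ --- the paper lets the Fourier coefficients $p^{l_n}_{11}\to 0$ (Bessel) and concludes $p_{00}=0$ in the limit, which is exactly the contrapositive of your uniform-lower-bound versus square-summability contradiction. One small correction: the ratios are \emph{not} bounded away from $1$ (the eigenvalues $\lambda_i$ may accumulate at $\half$), but this does not harm your argument, since all you need is an upper bound on $\bigl|\rho\,(1-\lambda_w)^{1/2}/\lambda_w^{1/2}-\lambda_w^{1/2}/(1-\lambda_w)^{1/2}\bigr|$, which $\e\le R\le 1-\e$ indeed provides.
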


\begin{proof}
Suppose, if possible, that $\{l_n:n \in \CF\}$ is an infinite sequence of distinct indices such that $(IJ,KL)$ is $l_n$-paired for each $n \in \N$. Then we may, by Remark \ref{l0}(1), conclude that
$\{\sigma(I,J),\sigma(K,L)\} = \{1,-1\}$. 

Deduce now from Lemma \ref{coeff}(i) that
\begin{align}\label{wprd1}
&\sigma (K,L) p_{00}(IJ,LK) +    \star p^{l_n}_{11}(IJ,Kl)\frac {\lambda_{l_n}^{1/2}} {(1-\lambda_{l_n})^{1/2}}\nonumber\\
 &=  \sigma (I,J)  p_{00}(IJ,LK) +  \star  p^{l_n}_{11}(IJ,KL) \frac {(1-\lambda_{l_n})^{1/2}} {\lambda_{l_n}^{1/2}}
\end{align}
where $\star \in \{+,-\}$. Since $ \lambda_{l_n} \in  ( \e, 1- \e ) \setminus \{1/2\} $ for all $n$, we see that
$\{\frac {\lambda_{l_n}^{1/2}} {(1-\lambda_{l_n})^{1/2}} : n \in \N\} $  and 
 $\{ \frac {(1-\lambda_{l_n})^{1/2}} {\lambda_{l_n}^{1/2}} : n \in \N\} $ are
bounded sequences. As
$p^{l_n}_{11}(IJ,LK)$  are Fourier 
coefficients, the sequence $\{ p^{l_n}_{11}(IJ,LK) \} $ converges to $0$, as $ n \rightarrow \infty $.
Clearly then  $\{\frac {\lambda_{l_n}^{1/2}} {(1-\lambda_{l_n})^{1/2}}  p^{l_n}_{11}(IJ,LK) : n \in \N  \} $ and 
$\{ \frac {(1-\lambda_{l_n})^{1/2}} {\lambda_{l_n}^{1/2}}  p^{l_n}_{11}(IJ,LK) : n \in \N\}$ are sequences
converges to $0$, as $ n \rightarrow \infty $.
So from the above equation we get $ p_{00}(IJ,LK) =0  $. But we had assumed that $p_{00}(IJ,LK)$ is non-zero.
Hence $v_{IJ} \otimes qv_{KL}$ can not be $l$-paired for 
infinitely many $l \in \CF$ with $l \notin J\cup L$.
\end{proof}

\begin{Lemma}\label{prd2}
Let $I,K \in F(\CP)$ and $J, L \in F(\CF)$ with $l \notin J\cup L$. Suppose  
an element of the form $v_{IJ} \otimes qv_{KL}$, appearing in 
$T\Omega \otimes \Omega$ with a non-zero coefficient.
Then we have $(-1)^{|I| + |J| } = (-1)^{|K| + |L|}$.
\end{Lemma}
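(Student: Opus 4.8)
The plan is to exploit the freedom in the choice of the auxiliary index $l$ together with the finiteness statement of Lemma \ref{prd1}. Fix $I, K \in F(\CP)$ and $J, L \in F(\CF)$, and suppose the coefficient of $v_{IJ} \otimes qv_{KL}$ in $T(\Omega \otimes \Omega)$ is non-zero. The crucial preliminary observation is that \emph{whether} this coefficient vanishes is an intrinsic property of $T$, independent of the index $l$ used to organise the expansion (\ref{pudec}) (only its type and sign may change with $l$). Moreover, since $J \cup L$ is a finite subset of the infinite set $\CF$, there are infinitely many indices $l \in \CF$ with $l \notin J \cup L$.

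First I would invoke Lemma \ref{prd1}, which asserts that the pair $(IJ, KL)$ can be $w$-paired for at most finitely many $w \in \CF \setminus (J \cup L)$. Combined with the fact that $\CF \setminus (J \cup L)$ is infinite, this lets me select an index $l \in \CF$ with $l \notin J \cup L$ for which $(IJ, KL)$ is $l$-\emph{unpaired}.

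Next, for this chosen $l$ I would locate where the (non-zero) coefficient of $v_{IJ} \otimes qv_{KL}$ sits in the representation (\ref{pudec}). Since the supports $spt(p_{00})=spt(p_{11})$, $spt(u_{00})$, $spt(u_{10})$, $spt(u_{01})$, $spt(u_{11})$ are pairwise disjoint, the non-vanishing coefficient places $(IJ, KL)$ in exactly one of these sets. Because $(IJ, KL)$ is $l$-unpaired it is not in $spt(p_{00})$, so its coefficient is recorded by one of the unpaired functions $u_{mn}^l$; and since neither tensor factor of $v_{IJ} \otimes qv_{KL}$ contains the vector $f_l$, the only eligible function is $u_{00}^l$. (By Corollary \ref{coeffz} the functions $u_{10}, u_{01}, u_{11}$ vanish identically, which makes this forced.) Hence $u_{00}^l(IJ, KL) \neq 0$.

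Finally I would apply Remark \ref{l0}(2), the translation of Lemma \ref{coeff}(ii): if $u_{00}(I_1 J_1, I_2 J_2) \neq 0$ then $\sigma(I_1, J_1) = \sigma(I_2, J_2)$, that is, $(-1)^{|I_1|+|J_1|} = (-1)^{|I_2|+|J_2|}$. Taking $(I_1, J_1) = (I, J)$ and $(I_2, J_2) = (K, L)$ yields $(-1)^{|I|+|J|} = (-1)^{|K|+|L|}$, as required. The one point that genuinely needs care is the first step: one must use the $l$-independence of the non-vanishing to pass to an index $l$ for which the configuration is \emph{unpaired}, so that the equal-parity constraint coming from $u_{00}$ applies, rather than the opposite-parity constraint $\sigma(I,J) \neq \sigma(K,L)$ that governs the paired part in Remark \ref{l0}(1). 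It is precisely Lemma \ref{prd1} that guarantees such an $l$ exists.
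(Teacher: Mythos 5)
Your proof is correct and follows essentially the same route as the paper: both use Lemma \ref{prd1} to select an index $l_0 \notin J\cup L$ for which $(IJ,KL)$ is unpaired, observe that the (unchanged, non-zero) coefficient must then appear in the $u_{00}$ part of the expansion, and conclude via Remark \ref{l0}(2). Your write-up is merely more explicit about why the other unpaired functions $u_{10}, u_{01}, u_{11}$ are ineligible, a point the paper leaves implicit.
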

\begin{proof}
From Lemma \ref{prd1}, we can find a $l_0 \in \CF$ such that 
$l_0 \notin J\cup L $ and  $v_{IJ} \otimes qv_{KL}$ is not 
$l_0$-paired. If we write $T\Omega \otimes \Omega$ with respect to $l_0$,
we see that $v_{IJ} \otimes qv_{KL}$ appears with exactly the same coefficient as in the third type of sum.
So by observing the Remark \ref{l0} with respect to $l_0$,  see that $(-1)^{|I| + |J| } = (-1)^{|K| + |L|}$.
\end{proof}

Again with the foregoing notations, we have the following 
Lemma about the coefficients of the representation of $T(\Omega\otimes\Omega)$.
\begin{Lemma}\label{prd3}
 $p_{00} = p_{11} = 0 $. 
\end{Lemma}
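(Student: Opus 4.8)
The plan is to show that no ordered pair can be $l$-paired; this immediately forces both paired coefficient functions to vanish, since by construction $spt(p_{00}) = spt(p_{11})$ is precisely the collection of $l$-paired ordered pairs. The whole point is that two facts already in hand pull the parity $\sigma$ in opposite directions, and the result will fall out of the resulting contradiction.

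First I would set up the bookkeeping. Recall that the supports $spt(p_{11})$, $spt(u_{00})$, $spt(u_{10})$, $spt(u_{01})$, $spt(u_{11})$ are pairwise disjoint. Consequently, if a pair $(IJ,KL)$ is $l$-paired, then $p_{00}(IJ,KL)$ is literally the coefficient of $v_{IJ}\otimes qv_{KL}$ in the full expansion \eqref{pudec} of $T(\Omega\otimes\Omega)$; in particular $v_{IJ}\otimes qv_{KL}$ occurs there with non-zero coefficient. So it suffices to prove $spt(p_{00})=\emptyset$. Suppose, for contradiction, that $(IJ,KL)$ is $l$-paired. Being $l$-paired means $J,L \in F_l(\CF)$, i.e.\ $l\notin J\cup L$; hence Lemma \ref{prd2} applies to the non-zero summand $v_{IJ}\otimes qv_{KL}$ and yields $(-1)^{|I|+|J|}=(-1)^{|K|+|L|}$, that is $\sigma(I,J)=\sigma(K,L)$. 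On the other hand, Remark \ref{l0}(1) --- extracted from Lemma \ref{coeff}(i) together with the standing assumption $\lambda_l\neq\half$ --- asserts that every $l$-paired pair satisfies $\sigma(I,J)\neq\sigma(K,L)$. This is the desired contradiction, so no pair is $l$-paired and therefore $p_{00}=p_{11}=0$.

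The argument is short and I do not anticipate a serious obstacle; the only point deserving care is the status of the parity identity coming from Lemma \ref{prd2}. That lemma derives $(-1)^{|I|+|J|}=(-1)^{|K|+|L|}$ by first invoking Lemma \ref{prd1} to pass to an auxiliary index $l_0\notin J\cup L$ at which $v_{IJ}\otimes qv_{KL}$ is \emph{not} paired, and then reading off the parity from the $u_{00}$-part with respect to $l_0$. I would emphasise that the conclusion is an intrinsic property of the vector $v_{IJ}\otimes qv_{KL}$, independent of which index is used to establish it, so it may be freely compared with the $l$-dependent constraint of Remark \ref{l0}(1). Once that consistency is acknowledged, the two parity statements are genuinely incompatible and the proof is complete.
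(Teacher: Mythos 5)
Your proof is correct and takes essentially the same route as the paper's: both rest on combining the parity equality of Lemma \ref{prd2} with the constraint coming from Lemma \ref{coeff}(i) under the standing assumption $\lambda_l \neq \half$. The only difference is organizational --- the paper substitutes the conclusion of Lemma \ref{prd2} directly into Lemma \ref{coeff}(i) to force $p_{11}=0$ (and then $p_{00}=0$ by equality of supports), whereas you invoke the packaged form of that computation, Remark \ref{l0}(1), and phrase the same incompatibility as a contradiction showing that no ordered pair can be $l$-paired.
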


\begin{proof}
Recall the equation \ref{coeff}(i) from Lemma \ref{coeff}:
\begin{align*}
&\sigma (I_2,J_2) p_{00}(I_1J_1,I_2J_2) +   p_{11}(I_1J_1,I_2J_2)\frac {\lambda_l^{1/2}} {(1-\lambda_l)^{1/2}}\\
 &=  \sigma (I_1,J_1)  p_{00}(I_1J_1,I_2J_2) +  \rho (I_1J_1, I_2J_2)   p_{11}(I_1J_1,I_2J_2) \frac {(1-\lambda_l)^{1/2}} {\lambda_l^{1/2}}
\end{align*}
where $\sigma (I_2,J_2) =  (-1)^{|I_2| + |J_2| }$ and $\sigma (I_1,J_1) = (-1)^{|I_1| + |J_1|}$.
But from \ref{prd2} we have
 $(-1)^{|I_2| + |J_2|} = (-1)^{|I_1| + |J_1|}$. Since $ \lambda_l \neq 1/2$, from 
the above equation we get $ p_{11}(I_1J_1,I_2J_2) = 0$, which implies that 
$ p_{00}(I_1J_1,I_2J_2) = 0$, since $spt(P_{00}) = spt(p_{11})$, i.e they 
have the same support. 
\end{proof}
\begin{Lemma}\label{prd4}
$T\Omega \otimes \Omega = \sum x(I_1 I_2)~ v_{I_1} \otimes qv_{I_2} $, where 
the summation is taken over $I_1, I_2  \in F(\CP)$ with $(-1)^{|I_1|} = (-1)^{|I_2|} $.
\end{Lemma}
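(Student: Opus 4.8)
The plan is to feed the vanishing statements already in hand into the single representation (\ref{pudec}) and then to let the auxiliary index $l$ sweep across all of $\CF$. Fix $l\in\CF$. The expansion (\ref{pudec}) is nothing but the full expansion of $T(\Omega\otimes\Omega)$ against the chosen orthonormal basis, organised according to whether $f_l$ occurs in the first and/or the second tensor slot: the coefficient functions $p_{11}$ and $u_{11}$ carry the vectors containing $f_l$ in both slots, $u_{10}$ (resp.\ $u_{01}$) those containing $f_l$ only in the first (resp.\ second) slot, and $p_{00}$ together with $u_{00}$ those containing no $f_l$ at all. By Corollary \ref{coeffz} we have $u_{10}=u_{01}=u_{11}=0$, and by Lemma \ref{prd3} we have $p_{00}=p_{11}=0$; hence the only surviving contribution is $u_{00}$, and
\[
T(\Omega\otimes\Omega) = \sum u_{00}(I_1J_1,I_2J_2)\, v_{I_1J_1}\otimes qv_{I_2J_2},
\]
the sum running over $I_1,I_2\in F(\CP)$ and $J_1,J_2\in F_l(\CF)=F(\CF\setminus\{l\})$. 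In particular, no basis vector occurring with a non-zero coefficient contains $f_l$ in either slot.

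The decisive step is to observe that this conclusion holds verbatim for \emph{every} $l\in\CF$, and to deduce from it that no surviving term can carry any future index at all. Suppose $v_{I_1J_1}\otimes qv_{I_2J_2}$ occurs with non-zero coefficient and that $J_1\cup J_2\neq\emptyset$; pick $l^\ast\in J_1\cup J_2$. Re-expanding $T(\Omega\otimes\Omega)$ with respect to $l^\ast$ and pulling $f_{l^\ast}$ out of whichever slot(s) it inhabits turns this vector, up to sign, into a summand of one of $\xi_T(u_{10})$, $\xi_T(u_{01})$, $\xi_T(u_{11})$ or $\xi_T(p_{11})$, each of which is identically zero by the results just invoked. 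This contradicts the non-vanishing of its coefficient. Consequently $J_1=J_2=\emptyset$ for every surviving term, so that
\[
T(\Omega\otimes\Omega)=\sum_{I_1,I_2\in F(\CP)} x(I_1I_2)\, v_{I_1}\otimes qv_{I_2},
\qquad x(I_1I_2):=u_{00}(I_1\emptyset,I_2\emptyset).
\]

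It then remains only to recover the parity relation, and for this I would invoke Remark \ref{l0}(2): if $x(I_1I_2)=u_{00}(I_1\emptyset,I_2\emptyset)\neq 0$ then $\sigma(I_1,\emptyset)=\sigma(I_2,\emptyset)$, which is precisely $(-1)^{|I_1|}=(-1)^{|I_2|}$. The only point demanding genuine care is the middle paragraph: one must be explicit that it is the vanishing of the $f_l$-bearing coefficient functions for \emph{each} $l$ separately, rather than for one fixed $l$, that eliminates the future, the reindexing being legitimate because passing between the $l$-expansions alters coefficients only by a sign (two wedge factors anticommute). Everything else is routine bookkeeping against the fixed orthonormal basis.
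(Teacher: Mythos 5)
Your proposal is correct and follows essentially the same route as the paper's own proof: feed Corollary \ref{coeffz} and Lemma \ref{prd3} into the expansion (\ref{pudec}), let $l$ range over all of $\CF$ to force $J_1=J_2=\emptyset$ (your re-expansion argument with $l^\ast$ is just a careful spelling-out of the paper's one-line claim ``since this is true for all $l\in\CF$, $J_1,J_2$ are empty sets''), and then read off the parity condition from Remark \ref{l0}(2). Your explicit treatment of the sign changes under re-indexing is a welcome clarification but not a different method.
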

\begin{proof}
So we started with a representation of $T\Omega \otimes \Omega$ like 
\ref{pudec} and by using the Corollary \ref{coeffz} and Lemma \ref{prd3} 
ended up with the following conclusions;
\[
p_{00} = p_{11} = u_{10}= u_{01} = u_{11} = 0.
\]
Thus finally the  representation of $T\Omega \otimes \Omega$ will be 
of the form  
\[
T(\Omega \otimes \Omega) =\sum u_{00}(I_1J_1,I_2J_2)~ v_{I_1J_1}\otimes qv_{I_2J_2},\\
\]
where the summation is taken over $I_1, I_2 \in F(\CP) $, $J_1, J_2 \in F(\CF)$ with 
$(-1)^{|I_2| + |J_2|} = (-1)^{|I_1| + |J_1|}$ and $ l \notin J_1 \cup J_2 $, for $l \in \CF$.
Since this is true for all $l \in \CF$, $J_1, J_2$  are empty sets, i.e.
 \[T(\Omega \otimes \Omega) = \sum x(I_1, I_2)~ v_{I_1} \otimes qv_{I_2},\] where
the summation is taken over $I_1, I_2  \in F(\CP)$ with $(-1)^{|I_1|} = (-1)^{|I_2|} $ and 
$x(I_1 ,I_2) = u_{00}(I_1 \emptyset,I_2 \emptyset)$
are complex numbers. 
\end{proof}
So finally the above Lemma \ref{prd4}  proves our theorem \ref{prdtcsy}.

\bigskip

\begin{Theorem}\label{prdtcsy1}
Let $T\in \alpha_t(M_R)^\prime \cap M_R$, then 
\[
 T( \Omega \otimes \Omega) \subset 
[\{ v_{I_1} \otimes qv_{I_2} :  I_1, I_2 \in F(\CP),  (-1)^{|I_1|} = (-1)^{|I_2|} \} ],
\]
\end{Theorem}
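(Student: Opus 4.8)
Theorem \ref{prdtcsy1} claims that any $T$ in the relative commutant $\alpha_t(M_R)' \cap M_R$ sends the cyclic vector $\Omega \otimes \Omega$ into the closed span of the "past, parity-matched" vectors $v_{I_1} \otimes qv_{I_2}$ with $(-1)^{|I_1|} = (-1)^{|I_2|}$. Since Theorem \ref{prdtcsy} already delivers exactly this conclusion under the operator hypotheses $A_1 T(\Omega\otimes\Omega) = A_2 T(\Omega\otimes\Omega)$ and $B_1 T(\Omega\otimes\Omega) = B_2 T(\Omega\otimes\Omega)$ for all $A_i \in \CA_l$, $B_i \in \CB_l$ and all $l \in \CF$, the entire content of this theorem is the \emph{reduction}: I must verify that every $T \in \alpha_t(M_R)' \cap M_R$ does satisfy those hypotheses. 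Thus the plan is to translate membership in the relative commutant into the algebraic identities on $T(\Omega\otimes\Omega)$ involving the operators $A_1,A_2,B_1,B_2$.

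\textbf{The plan.} First I would unwind the definitions of $A_1, A_2, B_1, B_2$. Recall $A_1 = (1-\lambda_l)^{-1/2} a_R(f_l)$ and $A_2 = -\lambda_l^{-1/2}\,\Gamma\otimes\Gamma\, b_R(f_l)$, with $f_l = s_t h_l \in L^2(t,\infty)\otimes\CK$, and by Lemma \ref{mod}(ii)--(iii) the operators $\Gamma\otimes\Gamma\, b_R(h)$ and $b_R^*(h)\,\Gamma\otimes\Gamma$ generate the commutant $M_R'$. The key observation is that the condition $A_1 T(\Omega\otimes\Omega) = A_2 T(\Omega\otimes\Omega)$ should be equivalent to a commutation of $T$ with a single element of $M_R$ applied to $\Omega\otimes\Omega$. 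Specifically, I expect that the combination $(1-\lambda_l)^{-1/2}a_R(f_l) + \lambda_l^{-1/2}\,\Gamma\otimes\Gamma\, b_R(f_l)$ annihilates the vacuum $\Omega\otimes\Omega$, so that $A_1(\Omega\otimes\Omega) = A_2(\Omega\otimes\Omega)$; this is essentially the statement that $\Omega\otimes\Omega$ is the GNS vector and that the quasi-free state kills the relevant creation/annihilation combination. Then, since $f_l = s_t h_l$ lies in the \emph{future} $L^2(t,\infty)\otimes\CK$, the element $a_R(f_l)$ lies in $\alpha_t(M_R)$ — indeed $\alpha_t(a_R(h_l)) = a_R(s_t h_l) = a_R(f_l)$ — and so does its commutant-side partner behave correctly under $\alpha_t'$. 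The identity $A_1 T(\Omega\otimes\Omega) = A_2 T(\Omega\otimes\Omega)$ should then follow from $T \in \alpha_t(M_R)'$ by pulling $T$ through: $A_i T(\Omega\otimes\Omega) = T A_i'(\Omega\otimes\Omega)$ for suitable $A_i'$ in the commutant, and then using that $A_1' (\Omega\otimes\Omega) = A_2'(\Omega\otimes\Omega)$ on the vacuum.

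\textbf{Executing the reduction.} Concretely, I would argue as follows. For $l \in \CF$, the operator $A_1 = (1-\lambda_l)^{-1/2}a_R(f_l)$ belongs to $\alpha_t(M_R)$ because $a_R(f_l) = \alpha_t(a_R(h_l))$ and $f_l$ is supported in the future. Meanwhile $A_2 = -\lambda_l^{-1/2}\,\Gamma\otimes\Gamma\, b_R(f_l)$ lies in $M_R'$ by Lemma \ref{mod}(ii); moreover, because $f_l$ is in the future, $A_2$ lies in $\alpha_t'(M_R') = \jmath\alpha_t\jmath(M_R')$, hence it lies in $(\alpha_t(M_R)')$-related algebra and in particular commutes with elements of $\alpha_t(M_R)$. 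Since $T \in \alpha_t(M_R)' \cap M_R$, it commutes with $A_1 \in \alpha_t(M_R)$: thus $A_1 T(\Omega\otimes\Omega) = T A_1(\Omega\otimes\Omega)$. Likewise $T \in M_R$ commutes with $A_2 \in M_R'$, giving $A_2 T(\Omega\otimes\Omega) = T A_2(\Omega\otimes\Omega)$. It therefore suffices to check the \emph{vacuum} identity $A_1(\Omega\otimes\Omega) = A_2(\Omega\otimes\Omega)$, which I would verify by direct computation from the representation formulas for $\pi_R(a(f))$ and the definition of $b_R$: both sides equal the same elementary vector $f_l\otimes\Omega$ (up to the normalizing scalars, which are exactly chosen to match), using $R^{1/2}f_l = \lambda_l^{1/2}f_l$ and $(1-R)^{1/2}f_l = (1-\lambda_l)^{1/2}f_l$. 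The analogous argument with $B_1 = \lambda_l^{-1/2}a_R^*(f_l) \in \alpha_t(M_R)$ and $B_2 = -(1-\lambda_l)^{-1/2}b_R^*(f_l)\,\Gamma\otimes\Gamma \in M_R'$ yields $B_1 T(\Omega\otimes\Omega) = B_2 T(\Omega\otimes\Omega)$. Having established both families of identities for all $l \in \CF$, I invoke Theorem \ref{prdtcsy} verbatim to conclude.

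\textbf{The main obstacle.} The genuinely delicate point is the vacuum identity $A_1(\Omega\otimes\Omega) = A_2(\Omega\otimes\Omega)$ together with the verification that $A_2$ (and $B_2$) really lie in the "future commutant" so that they commute with $\alpha_t(M_R)$ rather than merely with $M_R$. The first half is a bookkeeping computation with the two-sided Fock representation — one must track the $\Gamma$ factors and the anti-unitary $q$ carefully, using $\langle Bh_i, Bh_j\rangle = \delta_{ij}\lambda_i$ as recorded before Lemma \ref{vect1}. The second half hinges on the fact that $q$ commutes with $s_t$ (assumption (1)) and $R$ commutes with $s_t$ (assumption (5)), so that $b_R(f_l)$ is built from future data and hence lies in $\jmath(\alpha_t(M_R))$; this is where the strong Toeplitz hypotheses do their work. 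Once these two ingredients are in place the rest is a clean two-line commutation argument feeding into Theorem \ref{prdtcsy}.
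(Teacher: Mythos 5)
Your proposal is correct and takes essentially the same route as the paper's own proof: since $A_1 \in \alpha_t(M_R)$ and $T \in \alpha_t(M_R)^\prime$, one has $A_1T(\Omega\otimes\Omega)=TA_1(\Omega\otimes\Omega)$; since $A_2 \in M_R^\prime$ (Lemma \ref{mod}) and $T \in M_R$, one has $A_2T(\Omega\otimes\Omega)=TA_2(\Omega\otimes\Omega)$; both vacuum vectors $A_1(\Omega\otimes\Omega)$ and $A_2(\Omega\otimes\Omega)$ equal $f_l\otimes\Omega$ (similarly for $B_1,B_2$ with $\Omega\otimes f_l$), and then Theorem \ref{prdtcsy} finishes the argument. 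The only quibble is that your ``main obstacle'' remark --- that $A_2$ must commute with $\alpha_t(M_R)$ ``rather than merely with $M_R$'' --- is both backwards and unnecessary: your own reduction (like the paper's) uses only $A_2 \in M_R^\prime$ played against $T \in M_R$.
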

\begin{proof}
It is enough to prove that 
$A_1 T(\Omega\otimes\Omega) = A_2 T(\Omega\otimes\Omega)$ and 
$B_1 T(\Omega\otimes\Omega) = B_2 T(\Omega\otimes\Omega)$, then 
it follows from the Theorem \ref{prdtcsy1}.
\bigskip

Observe that, 
\begin{align*}
A_1& T(\Omega\otimes \Omega)\\
&= \frac {1} {(1-\lambda_l)^{1/2}}a_R(f_l)T\Omega\otimes \Omega \\
&= \frac {1} {(1-\lambda_l)^{1/2}} T a_R (f_l) \Omega\otimes \Omega~\text{ since } T \in \alpha_t(M_R)^\prime\\
&= T f_l \otimes \Omega \\
&= \frac {-1} {{\lambda_l}^{1/2}} T \Gamma\otimes \Gamma b_R(f_l) \Omega\otimes \Omega\\
&= \frac {-1} {{\lambda_l}^{1/2}}  \Gamma\otimes \Gamma b_R(f_l) T\Omega\otimes \Omega~
\text{ since } T \in M_R \text{ and } \Gamma\otimes \Gamma b_R(f_l)\in  M_R^\prime\\
&=A_2 T(\Omega\otimes \Omega).
\end{align*}

So we have $A_1 T(\Omega\otimes \Omega)  = A_2 T(\Omega\otimes \Omega)$.
Again observe that,  
\begin{align*}
B_1 &T(\Omega\otimes \Omega)\\
&= \frac {1} {{\lambda_l}^{1/2}}  a^*_R (f_l)T\Omega\otimes \Omega \\
&=\frac {1} {{\lambda_l}^{1/2}}T a^*_R (f_l)\Omega\otimes \Omega~\text{ since } T \in \alpha_t(M_R)^\prime\\
&= T\Omega \otimes f_l\\
&=  \frac {-1} {(1-\lambda_l)^{1/2}} T b^*_R (f_l)\Gamma \otimes \Gamma \Omega\otimes \Omega\\
&=  \frac {-1} {(1-\lambda_l)^{1/2}}  b^*_R (f_l)\Gamma \otimes \Gamma T\Omega\otimes \Omega,
\text{ since } T \in M_R \text{ and }  b^*_R(f_l)\Gamma\otimes \Gamma \in  M_R^\prime\\
&=B_2 T(\Omega\otimes \Omega).
\end{align*}
That is $B_1 T(\Omega\otimes \Omega) = B_2 T(\Omega\otimes \Omega)$.

\end{proof}

\begin{Theorem}\label{ntext}
CAR flow $\alpha = \{ \alpha_t : t \geq 0  \} $ is not not extendable. 
\end{Theorem}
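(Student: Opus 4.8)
The plan is to fix an arbitrary $t>0$ and show that the single endomorphism $\alpha_t$ fails to be extendable; since an $E_0$-semigroup is extendable only when each $\alpha_t$ is, this already proves the theorem. The bridge to the computations above is the criterion of \cite{BISS}: $\alpha_t$ is extendable exactly when the Hilbert space $H(t)=E^{\alpha_t}\cap E^{\alpha_t^\prime}$ is \emph{essential}, i.e. $\sum_i V_iV_i^*=1$ for an orthonormal basis $\{V_i\}$ of $H(t)$, equivalently $[H(t)\CH_R]=\CH_R$. I only need the forward implication: if an extension $\theta^{(2)}$ of $\CB(\CH_R)$ exists, then any $V$ intertwining $\theta^{(2)}$ with the identity lies in $E^{\alpha_t}\cap E^{\alpha_t^\prime}$, and conversely every element of $H(t)$ intertwines $\theta^{(2)}$ on the algebra generated by $M_R$ and $M_R^\prime$, i.e. on all of $\CB(\CH_R)$; thus the intertwiner space of $\theta^{(2)}$ is exactly $H(t)$, and unitality forces $\sum_iV_iV_i^*=\theta^{(2)}(1)=1$. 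So it suffices to exhibit a nonzero vector orthogonal to $[H(t)\CH_R]$.

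First I would rewrite $[H(t)\CH_R]$ by means of the bimodule description of the intertwiner spaces. By Theorem \ref{bimod} with (\ref{1}) and (\ref{2}), every $V\in H(t)$ has the form $V=yu_t$ with $y=Vu_t^*\in N:=\alpha_t(M_R)^\prime\cap P_1(t)$, and conversely $H(t)=Nu_t$. As $\alpha_t(M_R)=P(t)\subseteq N^\prime$, the algebra $N$ commutes with $\alpha_t(M_R)$, and since $\mathrm{ran}(u_t)=\overline{\alpha_t(M_R)(\Omega\otimes\Omega)}$ we obtain
\[
[H(t)\CH_R]=[N\,\mathrm{ran}(u_t)]=\big[\alpha_t(M_R)\,N(\Omega\otimes\Omega)\big].
\]
This reduces the task to controlling the ``vacuum slice'' $N(\Omega\otimes\Omega)$ and then tracking how $\alpha_t(M_R)$ enlarges it.

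The vacuum slice is governed by Theorem \ref{prdtcsy1}. Although stated for $T\in\alpha_t(M_R)^\prime\cap M_R$, its proof uses only that $T$ commutes with $a_R(f_l),a_R^*(f_l)\in\alpha_t(M_R)$ and with the operators $A_2,B_2$; and since $A_2,B_2\in\jmath(\alpha_t(M_R))=P_1(t)^\prime$ (by Lemma \ref{mod}(iii) these future operators lie in $P_1(t)^\prime$), the very same argument applies to every $T\in N=\alpha_t(M_R)^\prime\cap P_1(t)$, giving
\[
N(\Omega\otimes\Omega)\subseteq\CW:=\big[\{v_{I_1}\otimes qv_{I_2}:I_1,I_2\in F(\CP),\ (-1)^{|I_1|}=(-1)^{|I_2|}\}\big].
\]
The decisive observation is that the generators $a_R(f_l),a_R^*(f_l)$ of $\alpha_t(M_R)$ create or annihilate only \emph{future} modes and hence leave the \emph{past} occupation sets $I_1,I_2$, and in particular the two parities $(-1)^{|I_1|}$ and $(-1)^{|I_2|}$, unchanged. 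Thus $\alpha_t(M_R)$ preserves the larger ``past-diagonal'' subspace $\widetilde{\CW}=[\{v_{I_1J_1}\otimes qv_{I_2J_2}:(-1)^{|I_1|}=(-1)^{|I_2|}\}]\supseteq\CW$, whence $[H(t)\CH_R]=[\alpha_t(M_R)N(\Omega\otimes\Omega)]\subseteq\widetilde{\CW}$. Since $e_i\otimes\Omega=v_{\{i\}}\otimes qv_\emptyset$ (for any $i\in\CP$) has $(-1)^{|I_1|}=-1\neq 1=(-1)^{|I_2|}$ and so lies outside $\widetilde{\CW}$, we get $[H(t)\CH_R]\subsetneq\CH_R$; hence $\sum_iV_iV_i^*\neq 1$ and $\alpha_t$ is not extendable.

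The step I expect to be most delicate — and where the fermionic structure is essential — is the passage from the vacuum slice to $[H(t)\CH_R]=[\alpha_t(M_R)N(\Omega\otimes\Omega)]$: one must be certain that enlarging by the future algebra $\alpha_t(M_R)$ cannot break the past-parity constraint inherited from Theorem \ref{prdtcsy1}. This is exactly where CAR parts company with CCR: for the bosonic CCR flow the analogous past algebra and its vacuum slice exhaust $\CH_R$, whereas here the $\Z_2$-grading carried by $\Gamma$ in the definition of $a_R$ locks in the rigid condition $(-1)^{|I_1|}=(-1)^{|I_2|}$, producing a proper $\alpha_t(M_R)$-invariant subspace and thus non-extendability.
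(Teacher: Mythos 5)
Your proof is correct, and it reaches the conclusion by a genuinely different route than the paper's own argument. The paper invokes Theorem 3.7 of \cite{BISS} as a black box: extendability of $\alpha_t$ would force $\{y\alpha_t(x): x\in M_R,\ y\in \alpha_t(M_R)^\prime\cap M_R\}$ to be weakly total in $M_R$; it then combines the relative-commutant version of the vacuum-slice theorem (for $T\in\alpha_t(M_R)^\prime\cap M_R$) with the direct computation $\langle (a_R(f_J)a_R^*(f_L))^*T(\Omega\otimes\Omega),\ e_g\otimes\Omega\rangle=0$ to contradict weak totality. You instead derive your own necessary condition --- extendability forces $[H(t)\CH_R]=\CH_R$ for $H(t)=E^{\alpha_t}\cap E^{\alpha_t^\prime}$ --- and your derivation from unitality of the hypothetical extension $\theta^{(2)}$ is sound (whether \cite{BISS} states the criterion in exactly this ``essential'' form is immaterial, since you supply the argument for the implication you use). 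You then work with $N=\alpha_t(M_R)^\prime\cap P_1(t)$ rather than with $\alpha_t(M_R)^\prime\cap M_R$; since $M_R=\CJ M_R^\prime\CJ\subseteq\CJ\alpha_t(M_R)^\prime\CJ=P_1(t)$, your $N$ contains the paper's relative commutant, so your conclusion $e_g\otimes\Omega\perp[H(t)\CH_R]$ is formally stronger than the paper's orthogonality statement. The technical core coincides: both arguments rest on the parity constraint of Theorem \ref{prdtcsy} together with the observation that the future operators $a_R(f_l),a_R^*(f_l)$, $l\in\CF$, cannot alter past occupation sets --- your invariance of $\widetilde{\CW}$ under $\alpha_t(M_R)$ is exactly the mechanism that makes the paper's inner product vanish. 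Your extension of the vacuum-slice theorem from $\alpha_t(M_R)^\prime\cap M_R$ to $N$, which you justify by noting $A_2,B_2\in\CJ\alpha_t(M_R)\CJ=P_1(t)^\prime$ via Lemma \ref{mod}(iii), is in substance the second theorem carrying the label \ref{prdtcsy1} (in the super product system subsection, where the paper writes $T=T_1S_t=T_2S_t$ and evaluates at $\Omega\otimes\Omega$), which the paper proves only \emph{after} Theorem \ref{ntext} and for a different purpose. So your route costs a little more work up front (establishing the range criterion and the $N$-version of the vacuum-slice bound) but buys self-containedness --- no appeal to an external theorem of \cite{BISS} --- and a direct bridge to the super product system computation; the paper's route is shorter, given the imported necessary condition.
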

\begin{proof}
 It is enough to show that for some $t >0 $, $\alpha_t : M_R \mapsto M_R $ is not 
extendable. To prove $\alpha_t$ is not extendable, we use the
Theorem 3.7 of \cite{BISS}.
\bigskip
We observe that
\beast
\lefteqn{[\{  y\alpha_t(x) \Omega \otimes \Omega : x \in M_R, y \in \alpha_t(M_R)^\prime \cap M_R \}]}\\
&=& [\{  \alpha_t(x)y \Omega \otimes \Omega : x \in M_R, y \in \alpha_t(M_R)^\prime \cap M_R \}]\\
&=& [\{  (a_R (f_J) a^*_R (f_L))^* T\Omega \otimes \Omega : J,L \in F(\CF), T \in \alpha_t(M_R)^\prime \cap M_R \}].
\eeast
Now if $T \in \alpha_t(M_R)^\prime \cap M_R $, then from the Theorem \ref{prdtcsy1}
we have,  $ T(\Omega \otimes \Omega) \in \{ v_{I_1} \otimes qv_{I_2} :  I_1, I_2 \in F(\CP),  
(-1)^{|I_1|} = (-1)^{|I_2|} \}$. 
If $ g\in \CP $, we notice that 
\begin{align*} 
\langle  (a_R (f_J) a^*_R (f_L))^* &T\Omega \otimes \Omega,~e_g \otimes \Omega \rangle\\
&= \langle   T \Omega \otimes \Omega,~ a_R (f_I) a^*_R ( f_L) e_g \otimes \Omega \rangle\\
&= 0.
\end{align*}
So from the above, we conclude that 
$\{  y\alpha_t(x) \Omega \otimes \Omega : x \in M_R, y \in \alpha_t(M_R)^\prime \cap M_R \} $
is orthogonal to the vector $e_g \otimes \Omega$, i.e. 
$\{  y\alpha_t(x) : x \in M_R, y \in \alpha_t(M_R)^\prime \cap M_R \} $ 
can not be weakly total in $M_R$, so by the Theorem 3.7 of \cite{BISS}  $\alpha_t $ 
can not be extendable. 
\end{proof}

\begin{Remark}
 It has been proved in section 5\cite{ABS} that CAR flows, arising from 
quasi-free state for scalar, on type $III$  factors are  extendable.
But we have prove that CAR arising from quasi-free state for diagonalisable 
positive contractions ( in particular scalars ) are not extendable.
So our result shows that there is some error in section 5 \cite{ABS} 
regarding the conclusion
of extendability of CAR flows. In-fact we think that there is a mistake 
in the theorem 4 of section 5 \cite{ABS} and for that their conclusion
regarding the extendability of CAR flows went wrong. 

\end{Remark}

\subsection{Super Product System for CAR flows}
Now we recall the definition of fundamental unit, Since 
for every $t\geq 0$, 
quasi-free state is invariant under $\alpha_t$, we get the fundamental unit. 
Let $\{S_t\}_{t \geq 0 } $ be the fundamental unit for the CAR  flow.
Then recall from the Theorem \ref{bimod} that the common 
intertwiner space for the CAR flow 
is $H_t = \alpha_t(M_R)^\prime S_t \bigcap \CJ \alpha_t(M_R)^\prime \CJ S_t$.
Now our aim is to find explicitly $H_t (\Omega\otimes\Omega)$.

\begin{Theorem}\label{prdtcsy1}
Let $T\in H_t $, 
\[
 T( \Omega \otimes \Omega) \subset 
[\{ v_{I_1} \otimes qv_{I_2} :  I_1, I_2 \in F(\CP),  (-1)^{|I_1|} = (-1)^{|I_2|} \} ],
\]
\end{Theorem}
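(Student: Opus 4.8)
The plan is to reduce this statement to the already-established Theorem~\ref{prdtcsy}: it suffices to verify, for every $l\in\CF$, that $A_1 T(\Omega\otimes\Omega) = A_2 T(\Omega\otimes\Omega)$ and $B_1 T(\Omega\otimes\Omega) = B_2 T(\Omega\otimes\Omega)$ for $A_1,A_2\in\CA_l$ and $B_1,B_2\in\CB_l$, since the displayed conclusion is then exactly the output of Theorem~\ref{prdtcsy} (note $H_t\subset\CB(\CH_R)$). The difference from the earlier computation for operators in $\alpha_t(M_R)^\prime\cap M_R$ is that a general $T\in H_t$ need not commute with the relevant creation and annihilation operators, so I cannot simply slide $T$ through them. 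Instead I would exploit the two factorizations built into the definition $H_t = \alpha_t(M_R)^\prime S_t\cap\CJ\alpha_t(M_R)^\prime\CJ S_t$: write $T = XS_t = YS_t$ with $X\in\alpha_t(M_R)^\prime$ and $Y\in\CJ\alpha_t(M_R)^\prime\CJ$, and recall that $S_t(\Omega\otimes\Omega) = \Omega\otimes\Omega$, so that $T(\Omega\otimes\Omega) = X(\Omega\otimes\Omega) = Y(\Omega\otimes\Omega)$.

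The first step is to place the operators $A_i,B_i$ in the correct algebras. Since $f_l = s_th_l$, we have $a_R(f_l) = \alpha_t(a_R(h_l))\in\alpha_t(M_R)$ and likewise $a_R^*(f_l)\in\alpha_t(M_R)$, so $A_1$ and $B_1$ are scalar multiples of elements of $\alpha_t(M_R)$ and hence commute with $X$. For $A_2$ and $B_2$ I would invoke Lemma~\ref{mod}(iii): from $\CJ a_R(f_l)\CJ = (\Gamma\otimes\Gamma)b_R^*(f_l)$ together with the fact that $\Gamma\otimes\Gamma$ anticommutes with $b_R(f_l)$ and $b_R^*(f_l)$, one obtains $(\Gamma\otimes\Gamma)b_R(f_l) = -\CJ a_R^*(f_l)\CJ$ and $b_R^*(f_l)(\Gamma\otimes\Gamma) = -\CJ a_R(f_l)\CJ$. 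Because $a_R(f_l),a_R^*(f_l)\in\alpha_t(M_R)$, this exhibits $A_2$ and $B_2$ as scalar multiples of elements of $\CJ\alpha_t(M_R)\CJ = (\CJ\alpha_t(M_R)^\prime\CJ)^\prime$; in particular they commute with $Y$.

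The second step is the routine Fock-space evaluation $A_1(\Omega\otimes\Omega) = A_2(\Omega\otimes\Omega) = f_l\otimes\Omega$ and $B_1(\Omega\otimes\Omega) = B_2(\Omega\otimes\Omega) = \Omega\otimes qf_l$, where both target vectors lie in $ran(S_t)$ by Lemma~\ref{fnd} (namely $S_t(h_l\otimes\Omega) = f_l\otimes\Omega$ and $S_t(\Omega\otimes qh_l) = \Omega\otimes qf_l$). Assembling the pieces, $A_1T(\Omega\otimes\Omega) = A_1X(\Omega\otimes\Omega) = XA_1(\Omega\otimes\Omega) = X(f_l\otimes\Omega)$, while $A_2T(\Omega\otimes\Omega) = A_2Y(\Omega\otimes\Omega) = YA_2(\Omega\otimes\Omega) = Y(f_l\otimes\Omega)$. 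Writing $f_l\otimes\Omega = S_t\eta$, the common value equals $XS_t\eta = T\eta = YS_t\eta$, so the two expressions coincide; the identity $B_1T(\Omega\otimes\Omega) = B_2T(\Omega\otimes\Omega)$ follows in the same way from $\Omega\otimes qf_l = S_t\eta^\prime$. An application of Theorem~\ref{prdtcsy} then finishes the proof.

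I expect the only delicate point to be the first step: correctly tracking the signs when commuting $\Gamma\otimes\Gamma$ past $b_R(f_l)$ and combining this with Lemma~\ref{mod}(iii) so as to land $A_2$ and $B_2$ inside $\CJ\alpha_t(M_R)\CJ$. Once $A_1,B_1$ and $A_2,B_2$ are seen to commute with $X$ and $Y$ respectively, and the vectors $f_l\otimes\Omega$ and $\Omega\otimes qf_l$ are recognized to lie in $ran(S_t)$ so that $X$ and $Y$ agree on them, everything else is formal.
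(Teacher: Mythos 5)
Your proof is correct and takes essentially the same route as the paper's: decompose $T = XS_t = YS_t$ using the two descriptions of the intertwiner spaces, compute $A_i(\Omega\otimes\Omega)$ and $B_i(\Omega\otimes\Omega)$, and use that $X$ and $Y$ agree on $ran(S_t)$ before invoking Theorem \ref{prdtcsy}. Your explicit justification that $A_2,B_2$ are scalar multiples of elements of $\CJ\alpha_t(M_R)\CJ$ (via Lemma \ref{mod}(iii) and the sign from commuting $\Gamma\otimes\Gamma$ past $b_R(f_l)$) fills in a step the paper leaves implicit in the line ``since $T_2\in\CJ\alpha_t(M_R)^\prime\CJ$,'' but the argument is the same.
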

\begin{proof}
It is enough to prove that 
$A_1 T(\Omega\otimes\Omega) = A_2 T(\Omega\otimes\Omega)$ and 
$B_1 T(\Omega\otimes\Omega) = B_2 T(\Omega\otimes\Omega)$, then 
it follows from the Theorem \ref{prdtcsy}.
 
For $ T \in H_t $, we may - by Theorem \ref{prdtcsy}, as the $u_t$ there
is our $S_t$, when $M=M_R$ and $\alpha$ is the CAR flow - write $T = T_1S_t
= T_2S_t $, where  
$T_1 \in \alpha_t(M_R)^\prime$ and $T_2 \in \CJ \alpha_t(M_R)^\prime \CJ $. 
As $T_1$ and $T_2$ agree on the range of $S_t$, observe that,
\[
T(\Omega \otimes \Omega) = T_1(\Omega \otimes \Omega) = T_2 (\Omega \otimes \Omega),
\]
since $ S_t (\Omega\otimes \Omega) = (\Omega\otimes \Omega)$. 
We note that, 
\begin{align*}
A_1 T(\Omega\otimes \Omega)&= \frac {1} {(1-\lambda_l)^{1/2}}a_R(f_l)T\Omega\otimes \Omega \\
&=  \frac {1} {(1-\lambda_l)^{1/2}} a_R(f_l)T_1\Omega\otimes \Omega\\
&= \frac {1} {(1-\lambda_l)^{1/2}} T_1 a_R (f_l) \Omega\otimes \Omega~\text{ since } T_1 \in \alpha_t(M_R)^\prime\\
&= T_1 f_l \otimes \Omega \\
&= T_2 f_l \otimes \Omega ~~ \text{ by (3) above, since } T_1S_t = T_2S_t \\
&= \frac {-1} {{\lambda_l}^{1/2}} T_2 \Gamma\otimes \Gamma b_R(f_l) \Omega\otimes \Omega\\
&= \frac {-1} {{\lambda_l}^{1/2}}  \Gamma\otimes \Gamma b_R(f_l) T_2\Omega\otimes \Omega~\text{ since } T_2 \in \CJ\alpha_t(M_R)^\prime \CJ\\
&= \frac {-1} {{\lambda_l}^{1/2}}  \Gamma\otimes \Gamma b_R(f_l) T\Omega\otimes \Omega\\
&=A_2 T(\Omega\otimes \Omega).
\end{align*}

So we have $A_1 T(\Omega\otimes \Omega)  = A_2 T(\Omega\otimes \Omega)$.
Again by similar kind of computation as above, we observe that
$B_1 T(\Omega\otimes \Omega) = B_2 T(\Omega\otimes \Omega)$.

\end{proof}

Let us recall that $\{h_i\}_{i\in \N}$ and $\{ e_i \}_{ i\in \CP }$ are 
orthonormal bases of $\CH = L^2(0, \infty ) \otimes \CK$ and $L^2(0, t) \otimes \CK$ respectively. 
Now with respect to the fix orthonormal basis   $\{ e_i \}_{ i\in \CP }$ of $L^2(0, t) \otimes \CK$, 
we define the following operator on $\CH_R$. If $I_1, I_2 \in F(\CP) $ with $(-1)^{|I_1|} = (-1)^{|I_2|}$, 
then there exists an operator $T_{I_1 I_2} : \CH_R \mapsto \CH_R $ which is
defined by the following rule,
\[
T_{I_1 I_2} ( h_{J_1} \otimes q h_{J_2}) = (-1)^{|I_1|~|J_1|} s_th_{J_1} \wedge e_{I_1} \otimes q s_t h_{J_2} \wedge q e_{I_2},
\]
where $J_1, J_2$ are finite ordered subset of $\N$. With the above notation we have the following theorem.
\begin{Theorem}
For $t >0 $, we have 
\[
H_t = [ \{ T_{I_1 I_2 } : I_1, I_2 \in F(\CP) , (-1)^{|I_1|} = (-1)^{|I_2|} \}].
\]
\end{Theorem}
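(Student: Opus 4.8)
The plan is to evaluate operators at the cyclic vector $\Omega\otimes\Omega$ and transport the whole question into $\CH_R$, where Theorem \ref{prdtcsy1} already localises the answer. Recall from Section 2 that $H_t=E^{\alpha_t}\cap E^{\alpha_t^\prime}$ is a Hilbert space under $T^*S=\langle S,T\rangle I$; evaluating this identity at $\Omega\otimes\Omega$ and using $\|\Omega\otimes\Omega\|=1$ gives $\langle S(\Omega\otimes\Omega),T(\Omega\otimes\Omega)\rangle=\langle S,T\rangle$, so
\[
\Phi:H_t\to\CH_R,\qquad \Phi(T)=T(\Omega\otimes\Omega),
\]
is a linear isometry and $\Phi(H_t)$ is a closed subspace of $\CH_R$. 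By Theorem \ref{prdtcsy1} (the one asserting $T(\Omega\otimes\Omega)\in\CV$ for $T\in H_t$) we have $\Phi(H_t)\subseteq\CV$, where $\CV:=[\{v_{I_1}\otimes qv_{I_2}:I_1,I_2\in F(\CP),(-1)^{|I_1|}=(-1)^{|I_2|}\}]$; and since $v_{I_1}\otimes qv_{I_2}=e_{I_1}\otimes qe_{I_2}$ runs over an orthonormal family (the $e_i$ form an orthonormal basis of the past and $q$ is anti-unitary), these vectors are an orthonormal basis of $\CV$. Everything therefore reduces to identifying the $\Phi$-preimages of this basis.

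The core claim I would establish is that, for each admissible $(I_1,I_2)$, the operator $T_{I_1I_2}$ lies in $H_t$ and satisfies $\Phi(T_{I_1I_2})=v_{I_1}\otimes qv_{I_2}$. The value at the vacuum is immediate from the defining formula on taking $J_1=J_2=\emptyset$. For boundedness I note that $T_{I_1I_2}$ sends the orthonormal basis $\{h_{J_1}\otimes qh_{J_2}\}$ of $\CH_R$ to the orthonormal family $\{s_th_{J_1}\wedge e_{I_1}\otimes qs_th_{J_2}\wedge qe_{I_2}\}$ (these are genuine unit wedges because $e_{I_1}$ lies in the past $L^2(0,t)\otimes\CK$, which is orthogonal to $\mathrm{ran}(s_t)$), so $T_{I_1I_2}$ extends to an isometry in $\CB(\CH_R)$.

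The main obstacle is showing $T_{I_1I_2}\in H_t$. Since $M_R=\{a_R(h_i):i\in\N\}^{\prime\prime}$ and, by normality of $\alpha_t$, the set $\{y\in M_R:\alpha_t(y)T_{I_1I_2}=T_{I_1I_2}y\}$ is weakly closed, it suffices to verify on the generators the relations $a_R(f_i)\,T_{I_1I_2}=T_{I_1I_2}\,a_R(h_i)$ and their adjoints, using $\alpha_t(a_R(h_i))=a_R(f_i)$ with $f_i=s_th_i$; this places $T_{I_1I_2}$ in $E^{\alpha_t}$. Membership in $E^{\alpha_t^\prime}$ follows by the symmetric computation for the generators $\Gamma\otimes\Gamma\,b_R^*(h_i)$ of $M_R^\prime$ (Lemma \ref{mod}(ii)), together with $\alpha_t^\prime(\Gamma\otimes\Gamma\,b_R^*(h_i))=\Gamma\otimes\Gamma\,b_R^*(f_i)$. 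The actual verification is a sign-tracking computation on the basis vectors $h_{J_1}\otimes qh_{J_2}$: one applies the explicit form $a_R(f)=a((1-R)^{1/2}f)\otimes\Gamma+1\otimes a^*(qR^{1/2}f)$ and the companion formula for $b_R$ of Lemma \ref{mod}, invokes Lemma \ref{fnd} so that the ``$s_t$ part'' of $T_{I_1I_2}$ matches the fundamental unit $S_t$, and uses $\Gamma\Omega=\Omega$, $\Gamma a(f)=-a(f)\Gamma$ to reconcile the $\Gamma$-twists. The factor $(-1)^{|I_1||J_1|}$ in the definition of $T_{I_1I_2}$ is precisely what absorbs the cost of commuting the future creation and annihilation operators $a_R(f_i)$ past the past block $e_{I_1}$, so that the two sides agree; this is where that normalisation earns its keep.

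Granting these facts, I assemble the result. By the core claim $\Phi(H_t)$ contains every $v_{I_1}\otimes qv_{I_2}$ with $(-1)^{|I_1|}=(-1)^{|I_2|}$, and being closed it contains their span $\CV$; combined with $\Phi(H_t)\subseteq\CV$ this gives $\Phi(H_t)=\CV$, so $\Phi$ is an isometric isomorphism of $H_t$ onto $\CV$ carrying $T_{I_1I_2}$ to $v_{I_1}\otimes qv_{I_2}$. Since an isometric isomorphism pulls an orthonormal basis back to an orthonormal basis, $\{T_{I_1I_2}\}$ is an orthonormal basis of $H_t$, and therefore $H_t=[\{T_{I_1I_2}:I_1,I_2\in F(\CP),(-1)^{|I_1|}=(-1)^{|I_2|}\}]$, as desired.
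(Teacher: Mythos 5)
Your proposal is correct and takes essentially the same route as the paper: both halves of your argument — establishing $T_{I_1 I_2}\in H_t$ by checking the intertwining relations on the generators $a_R(h_l)$, $a^*_R(h_l)$ and then deducing totality from Theorem \ref{prdtcsy1} together with the Hilbert-space structure of $H_t$ (your isometry $\Phi$ is precisely the paper's computation $\langle T(\Omega\otimes\Omega),\,T_{I_1I_2}(\Omega\otimes\Omega)\rangle=\langle T,\,T_{I_1I_2}\rangle$ in its orthogonality argument that any $T\in H_t$ orthogonal to all $T_{I_1I_2}$ must vanish) — coincide with the paper's proof. The one divergence is minor: for membership in the commutant intertwiner space $E^{\alpha_t^\prime}$ you propose a second sign-tracking computation against the generators $\Gamma\otimes\Gamma\, b_R^*(h_i)$ of $M_R^\prime$, whereas the paper avoids this by computing $\CJ T_{I_1 I_2}\CJ = T_{I_2 I_1}$ and reusing the already-established inclusion $T_{I_2I_1}\in E^{\alpha_t}=\alpha_t(M_R)^\prime S_t$ together with the fact that $\CJ$ commutes with $S_t$; both verifications succeed, the paper's being shorter and yours more self-contained.
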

\begin{proof}
If $I_1, I_2 \in F(\CP)$ with  $(-1)^{|I_1|} = (-1)^{|I_2|}$, we check that 
$T_{I_1 I_2 } \in H_t$. We want to prove $\alpha_t(a_R (h_l) ) T_{I_1 I_2} = T_{I_1 I_2} a_R(h_l)$
for all $ l \in \N$. 
We observe that 
\begin{align*}
\alpha_t(a_R (h_l) ) &T_{I_1 I_2} ( h_{J_1} \otimes q h_{J_2})\\
&= a_R (s_th_l) T_{I_1 I_2} ( h_{J_1} \otimes q h_{J_2})\\
&= (-1)^{|I_1||J_1|} a_R (s_th_l) ( s_th_{J_1} \wedge e_{I_1} \otimes q s_t h_{J_2} \wedge q e_{I_2})\\
&= \frac{(-1)^{|I_1||J_1|+ |J_2| + |I_2|}} {(1-\lambda_l)^{1/2}} (s_th_l \wedge s_th_{J_1} \wedge e_{I_1} \otimes q s_t h_{J_2} \wedge q e_{I_2})\\
&+ \frac{(-1)^{|I_1|~|J_1|}} {{\lambda_l}^{1/2}}  s_th_{J_1} \wedge e_{I_1} \otimes a^*(s_tqh_l)q s_t h_{J_2} \wedge q e_{I_2})\\
&= \frac{(-1)^{|I_1|~(|J_1|+ 1)+|J_2|}} {(1-\lambda_l)^{1/2}} (s_th_l \wedge s_th_{J_1} \wedge e_{I_1} \otimes q s_t h_{J_2} \wedge q e_{I_2})\\
&+ \frac{(-1)^{|I_1|~|J_1|}} {{\lambda_l}^{1/2}}  s_th_{J_1} \wedge e_{I_1} \otimes a^*(s_tqh_l)q s_t h_{J_2} \wedge q e_{I_2}),\\
\end{align*}
In the above to write second last to last equation, we have used $(-1)^{|I_1|} = (-)^{|I_2|}$. 
On the other hand observe that 
\begin{align*}
T_{I_1 I_2}& a_R (h_l)( h_{J_1} \otimes q h_{J_2})\\
&= T_{I_1 I_2} ((-1)^{|J_2|} (1-\lambda_l)^{1/2} (h_l \wedge h_{J_1} \otimes qh_{J_2})\\
&+ {\lambda_l}^{1/2} h_{J_1} \otimes a^*(qh_l)q h_{J_2} )\\
&= \frac{(-1)^{|I_1|~(|J_1|+ 1)+|J_2|}} {(1-\lambda_l)^{1/2}} (s_th_l \wedge s_th_{J_1} \wedge e_{I_1} \otimes q s_t h_{J_2} \wedge q e_{I_2})\\
&+ \frac{(-1)^{|I_1|~|J_1|}} {{\lambda_l}^{1/2}} s_th_{J_1} \wedge e_{I_1} \otimes a^*(s_tqh_l)q s_t h_{J_2} \wedge q e_{I_2})\\,
\end{align*}

Note that in the above equation we have used $s_t q = qs_t$. Finally above computations show that 
$\alpha_t(a_R (h_l) ) T_{I_1 I_2} = T_{I_1 I_2} a_R (h_l)$. Again similar computation will show that 
$\alpha_t(a^*_R (h_l) ) T_{I_1 I_2} = T_{I_1 I_2} a^*_R (h_l)$, for all $l \in \N$.
So we conclude that $T_{I_1I_2} \in  E^{\alpha_t} $.
From theorem \ref{bimod} we have
$ E^{\alpha_t} =\alpha_t(M)^\prime S_t $. So we have  $T_{I_1I_2} \in \alpha_t(M_R)^\prime S_t$.
Now recall the definition of $\CJ$ from lemma \ref{mod} and  observe that 
\beast
\CJ T_{I_1 I_2 } \CJ ( h_{J_1} \otimes q h_{J_2}) &=& 
\CJ T_{I_1 I_2 } ( q^2h_{\tilde{{J_2}}} \otimes q h_{\tilde{{J_1}}})\\
&=&\CJ T_{I_1 I_2 } ( h_{\tilde{{J_2}}} \otimes q h_{\tilde{{J_1}}})\\
&=& \CJ ((-1)^{|I_1|~|J_2|} s_th_{\tilde{{J_2}}} \wedge e_{I_1} \otimes s_t q h_{\tilde{{J_1}}} \wedge q e_{I_2})\\
&=& (-1)^{|I_1|~|J_2|} e_{\tilde{{I_2}}} \wedge s_t h_{J_1}  \otimes  q e_{\tilde{{I_1}}} \wedge  s_t q h_{J_2} \\
&=& (-1)^{|I_2|~|J_1|}  s_t h_{J_1} \wedge e_{I_2}  \otimes  s_t q h_{J_2} \wedge  q e_{I_1} \\
&=&T_{I_2I_1 }  ( h_{J_1} \otimes q h_{J_2}). 
\eeast
In the above equation we have used the property that two vector anti commute under wedge product. 
The above equation says that $\CJ T_{I_1 I_2 } \CJ = T_{I_2I_1 } \in \alpha_t(M)^\prime S_t$. That is
$T_{I_1 I_2 } \in \CJ \alpha_t(M)^\prime S_t \CJ = \CJ \alpha_t(M)^\prime \CJ S_t $, since $ \CJ$ commute with $S_t$.
So from the theorem \ref{bimod} it is clear that  $T_{I_1I_2} \in H_t$, i.e. 
\[
[ \{ T_{I_1 I_2 } : I_1, I_2 \in F(\CP) , (-1)^{|I_1|} = (-1)^{|I_2|} \}] \subset H_t.  
\]
To prove the equality we show that if $ T \in H_t $ and 
$T \in [ \{ T_{I_1 I_2 } : I_1, I_2 \in F(\CP) , (-1)^{|I_1|} = (-1)^{|I_2|} \}]^{\perp}$,  then $T = 0 $.
$T \in H_t $ implies that  $\alpha_t (x)T = Tx $ for all $ x \in M_R $, i.e.
$\alpha_t (x)T\Omega \otimes \Omega  = Tx\Omega \otimes \Omega $. As $\Omega \otimes \Omega$ is cyclic 
 for $M_R $, conclude that $ T$ is determined by its action on $\Omega \otimes \Omega$. So 
to prove $T=0$, it is enough to prove that  $T ( \Omega \otimes \Omega ) = 0 $. If $ I_1, I_2 \in F(\CF)$ with
$(-1)^{|I_1|} =(-1)^{|I_2|}$, then we notice that 
\beast 
\langle T ( \Omega \otimes \Omega ) , e_{I_1} \otimes qe_{I_2} \rangle &=& \langle T ( \Omega \otimes \Omega ) , T_{I_1I_2} (\Omega \otimes \Omega) \rangle\\
&=& \langle T^*_{I_1I_2} T ( \Omega \otimes \Omega ) , T_{I_1I_2} (\Omega \otimes \Omega)\rangle\\ 
&=& \langle  T_{I_1I_2},  T \rangle \langle  ( \Omega \otimes \Omega ) , T_{I_1I_2} (\Omega \otimes \Omega)\rangle\\ 
&=& 0 ~\text{ since } \langle  T_{I_1I_2},  T \rangle = 0 . 
\eeast
So we have $T( \Omega \otimes \Omega ) \in [\{ v_{I_1} \otimes qv_{I_2} :  I_1, I_2 \in F(\CP),  (-1)^{|I_1|} = (-1)^{|I_2|} \} ]^\perp$,
but Theorem \ref{prdtcsy} says that $T( \Omega \otimes \Omega ) 
\in [\{ v_{I_1} \otimes qv_{I_2} :  I_1, I_2 \in F(\CP),  (-1)^{|I_1|} = (-1)^{|I_2|} \} ]$. So $T \Omega \otimes \Omega $ has 
to be zero. So we get 
\[
H_t = [ \{ T_{I_1 I_2 } : I_1, I_2 \in F(\CP) , (-1)^{|I_1|} = (-1)^{|I_2|} \}].
\]
\end{proof}
Now write $ H = \{ ( t, x_t ) : t >0,~  x_t \in H_t \}$. Obviously $H$ is the 
super product system for {\bf CAR flow}.

\section{CCR and CAR flow }
We have already described CAR flow on type $III$ factors. 
Let us describe CCR flow as follows.
 
Let $\CH = L^2(0, \infty ) \otimes \CK$, where $\CK$ is a Hilbert space.
For $ n= 0, 1, 2, \cdots $ we will 
write $\CH^n$ for the symmetric tensor product of $n$ copies of 
$\CH$ for $n\geq 1$ with $\CH^0 = \C $. The symmetric Fock space 
over $H$ is defined as the direct sum of Hilbert spaces 
\[
 \CF_{+}(\CH) = \sum_{n = 0 }^ \infty {\CH}^n 
\]
The exponential map $\exp : \CH \rightarrow \CF_{+}(\CH)$ is defined by 
\[
 \exp(f) = \sum_{n=0}^\infty \frac{1} {\sqrt{n!}} f^{\otimes n }
\]
The symmetric Fock space $\CF_{+}(\CH) $ is span closure of the vectors 
of the form $\exp(f)$ , $f\in \CH$.
Now for every vector $f \in \CH $   there is a 
unique unitary operator $W(f)$ on $\CF_{+}(\CH)$
satisfies 
\[
 W(\xi) \exp (\eta) = e^{ -1/2 ||f|| - \langle g , f \rangle } \exp ( g + f ) 
\]
Let {\em CCR}$(\CH)$ be the unital $C^*$-algebra generated by $\{ W(f) : f \in \CH \}$ in 
$\CB(\CF_{+}(\CH))$. Let $T$ be a positive operator on $\CB( \CH)$. Then 
the operator $T$ determines the a state on 
{\em CCR}$(\CH)$ which satisfies the conditions; 
\[
 \varphi_T(W(f) ) = e^{ - 1/2 || (1+ 2T)^{1/2} f ||}.
\]
This is called the quasi-free state with symbol $T$. 

Consider the Hilbert space $\CH_T = \CF_{+}(\CH) \otimes \CF_{+}(\CH) $.  There exists a representation
$\pi_T$ of the $C^*$-algebra  {\em CCR}$(\CH)$ on the Hilbert space 
$\CH_T = \CF_{+}(\CH) \otimes \CF_{+}(\CH) $,
 defined by the formula
\begin{align*}
 &\pi_T (W(f)) = W((1+T)^{1/2}f) \otimes W(qT^{1/2}f),
\end{align*}
 where $ f \in \CH$              
and $q$ is an anti-unitary operator on $\CH$ with $q^2=1$ (see  \cite{BrRo}, or 
\cite{ArWo}).
In this representation,
the state  $\varphi_T$  becomes the vector state 
\[
\varphi_T (x) = \langle \Omega \otimes \Omega, \pi_T(x)\Omega \otimes \Omega\rangle,
\]
for $x \in ${\em CCR}$(\CH)$, and  $\CH_T =  \CF_+(\CH) \otimes \CF_+(\CH) = 
\overline{\pi_T(\text{{\em CCR}}(\CH)) \Omega \otimes \Omega} $ 
is the GNS Hilbert space, under the assumption that $T$ is injective 
(and hence also has dense range). So $( \pi_T , \CH_T , \Omega \otimes \Omega )$ is 
the GNS triple for the $C^*$-algebra {\em CCR}$(\CH)$ with respect to the state $\varphi_T $.
We write $M_T = \{ \pi_T (\text{{\em CCR}}(\CH)\}^{\prime \prime }$. 

\bigskip 
Let $\{s_t\}_{t\geq}$ be the shift semigroup on $\CH$ and suppose that $T$ commutes 
with $s_t$ for all $t\geq0$. Then $M_T$ is a type III factor (see \cite{Hlv}) and the CCR flow 
\cite{Arv} restricts to an $E_0$-semigroup on $M_T$, 
$ \alpha = \{ \alpha_t : t\geq 0 \} $ uniquely 
determined by the following condition:
\[
 \alpha_t( \pi_T(W(f)) = \pi_T(W(s_tf)),
\]
for all $ f \in \CH, t\geq 0 $. This $E_0$-semigroup is called {\bf CCR flow of 
 rank dim $\CK$ }.

Note that if 
$T=\frac{\lambda}{1-\lambda}$ with $\lambda\in (0,1)$, then
it is well-known that $M_\lambda=M_T$ is a type
$III_\lambda$ factor. 
Further, It has been mentioned in the section of examples of \cite{BISS} together with \cite{MS}
that  $\{\alpha_t ;t\geq 0\}$
is equi-modular and all these $E_0-$semigroups on type $III_\lambda$ 
factors are extendable.

\begin{Remark}
Type $III$ factors arising from quasi-free representation  of CCR and CAR algebras
with respect to the quasi-free states
will always be hyperfinite factors (see \cite{ArWo}). In particular in both the cases
we find hyperfinite $III_{\lambda}$ factors for $\lambda \in (0,1) \setminus \{{\half}\}$.
Since $III_{\lambda}$ factors are unique for every $\lambda \in (0,1) \setminus \{{\half}\}$,
so we have two families of $E_0$-semigroups namely CAR flows and CCR flows on the same factor. 
\end{Remark}
Now we have the following Corollary to the Theorem \ref{ntext} regarding 
the cocycle conjugacy of CAR flows and CCR flows.

\begin{Corollary}
 The CAR and CCR flows arising from quasi-free states are not cocycle conjugate.
\end{Corollary}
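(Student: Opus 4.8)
The plan is to deduce the Corollary as a formal consequence of the obstruction established in Theorem~\ref{ntext} together with the contrasting extendability of the CCR flows noted just above. First I would recall the defining property of cocycle conjugacy: two $E_0$-semigroups $\alpha$ on $M_R$ and $\beta$ on $M_T$ are cocycle conjugate if there is a $*$-isomorphism $\theta : M_R \to M_T$ and a family of unitaries (a cocycle) intertwining $\theta \circ \alpha_t \circ \theta^{-1}$ with $\beta_t$ up to conjugation by the cocycle. The key structural point I would isolate is that extendability is an invariant of cocycle conjugacy. This is the heart of the argument, and I would state it as the first step: if $\alpha$ and $\beta$ are cocycle conjugate, then $\alpha$ is extendable if and only if $\beta$ is. The reason is that the intertwiner-space description and the basic construction $P_1(t) = \CJ P(t)^\prime \CJ$ recorded in \eqref{1} and \eqref{2}, which govern extendability via Theorem~3.7 of \cite{BISS}, are preserved under composing with an isomorphism and perturbing by a unitary cocycle.

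Granting this invariance, the proof is short. By the Remark preceding the Corollary, both the CAR flow on $M_R$ and the CCR flow on $M_T$ can be realized on the \emph{same} hyperfinite $III_\lambda$ factor for $\lambda \in (0,1)\setminus\{\tfrac12\}$, since such factors are unique; so there is no obstruction coming merely from the ambient von Neumann algebra, and the question of cocycle conjugacy is genuinely a question about the semigroups. I would then invoke the two facts already in hand: the CCR flows on type $III_\lambda$ factors are extendable (stated above, from the examples in \cite{BISS} together with \cite{MS}), whereas by Theorem~\ref{ntext} the CAR flows arising from our quasi-free states are \emph{not} extendable. If the CAR and CCR flows were cocycle conjugate, the invariance step would force them to have the same extendability status, a contradiction. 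Hence they are not cocycle conjugate.

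The main obstacle I anticipate is justifying the first step---that extendability is a cocycle-conjugacy invariant---rigorously rather than by appeal to intuition. One must check that an extension $\theta^{(2)}$ of $\alpha_t$ to $\CB(\CH)$ can be transported along the isomorphism $\theta$ and corrected by the cocycle to produce an extension of $\beta_t$; here one uses that $\theta$ is spatially implemented in standard form (so it carries the modular conjugation $\CJ$ and the commutant $M^\prime$ correctly, preserving equi-modularity) and that conjugation by a unitary in $M$ automatically extends to all of $\CB(\CH)$. Once the definitions are unwound in the standard form of Definition~2 and the criterion of Theorem~3.7 of \cite{BISS} is applied, this amounts to a diagram-chase with unitaries and is routine; I would present it as a brief lemma and then the Corollary follows immediately.
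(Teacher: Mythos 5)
Your proposal is correct and follows essentially the same route as the paper: cite the extendability of the CCR flows (from \cite{BISS}), the non-extendability of the CAR flows (Theorem \ref{ntext}), and the fact that extendability is a cocycle conjugacy invariant. The only difference is that the paper simply asserts this invariance while you propose to prove it as a lemma, which is a reasonable elaboration of the same argument rather than a different approach.
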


\begin{proof}
It has been proved in \cite{BISS} that CCR flows arising 
from these quasi-free states are extendable.
By Theorem \ref{ntext}, CAR flows arising from quasi-free states
are not extendable. But extendability of $E_0$-semigroup is a cocycle conjugacy invariant,
so the result follows.
\end{proof}

\begin{Remark}
 This result is surprising, since on the type $I$
factor CCR and CAR flows of same Arveson index are cocycle conjugate(\cite{Arv}). 
\end{Remark}

\bigskip \noindent

\begin{acknowledgements}
 
I would like to thank Professors V.S. Sunder and 
R. Srinivasan for very useful  discussions while this work 
was progressing. I will also thank Oliver T. Margetts
for bringing my attention to the comparison of CCR and CAR flows.   

\end{acknowledgements}

\bigskip \noindent

\textit{Email id. panchugopal@imsc.res.in, pg.math@gmail.com \\
Institute of Mathematical Sciences, Chennai}

\bibliographystyle{amsalpha}
\bibliography{references}

\providecommand{\bysame}{\leavevmode\hbox to3em{\hrulefill}\thinspace}
\providecommand{\MR}{\relax\ifhmode\unskip\space\fi MR }
% \MRhref is called by the amsart/book/proc definition of \MR.
\providecommand{\MRhref}[2]{%
  \href{http://www.ams.org/mathscinet-getitem?mr=#1}{#2}
}
\providecommand{\href}[2]{#2}
\begin{thebibliography}{Amo01}

\bibitem[ABS01]{ABS}
G.G. Amosov, A.V. Bulinski, and M.E. Shirkov, \emph{Regular semigroup of
  endomorphism of von {N}eumann factors}, Mathematical notes \textbf{70}
  (2001), no.~5. \MR{643659}

\bibitem[Ale04]{Alev}
Alexis Alevras, \emph{One parameter semigroups of endomorphisms of factors of
  type {$ II_1$}}, J. Operator Theory \textbf{51} (2004), no.~1, 161--179.
  \MR{2055810 (2005d:46136)}

\bibitem[Amo01]{G}
G.~G. Amosov, \emph{On cocycle conjugacy of quasifree endomorphism semigroups
  on the {CAR} algebra}, J. Math. Sci. (New York) \textbf{105} (2001), no.~6,
  2496--2503.

\bibitem[Arv03]{Arv}
William Arveson, \emph{Noncommutative dynamics and {$E$}-semigroups}, Springer
  Monographs in Mathematics, Springer-Verlag, New York, 2003. \MR{1978577
  (2004g:46082)}

\bibitem[AW69]{ArWo}
Huzihiro Araki and E.~J. Woods, \emph{A classification of factors}, Publ. Res.
  Inst. Math. Sci. Ser. A \textbf{4} (1968/1969), 51--130. \MR{0244773 (39
  \#6087)}

\bibitem[BISSar]{BISS}
Panchugopal Bikram, Masaki Izumi, R.~Srinivasan, and V.S. Sunder, \emph{On
  extendability of endomorphisms and of ${E}_0$-semigroups on factors}, Kyushu
  J. Math. (to appear), preprint arXiv:1304.4341.

\bibitem[BR81]{BrRo}
Ola Bratteli and Derek~W. Robinson, \emph{Operator algebras and
  quantum-statistical mechanics. {II}}, Springer-Verlag, New York, 1981,
  Equilibrium states. Models in quantum-statistical mechanics, Texts and
  Monographs in Physics. \MR{611508 (82k:82013)}

\bibitem[Hol71]{Hlv}
A.~S. Holevo, \emph{Quasi-free states on the {$C\sp*$}-algebra of commutation
  relations. {I}}, Teoret. Mat. Fiz. \textbf{6} (1971), no.~1, 3--20.
  \MR{0479143 (57 \#18591)}

\bibitem[MS12]{MS}
Oliver~T. Margetts and R.~Srinivasan, \emph{Invariants for ${E_0}$-semigroups
  on ${II_1}$ factors}, 2012.

\bibitem[Pow88]{Pr}
Robert~T. Powers, \emph{An index theory for semigroups of {$^*$}-endomorphisms
  of {${B}({H})$} and type {${II}_1$} factors}, Canad. J. Math. \textbf{40}
  (1988), no.~1, 86--114.

\bibitem[PS70]{PoSt}
Robert~T. Powers and Erling St{\o}rmer, \emph{Free states of the canonical
  anticommutation relations}, Comm. Math. Phys. \textbf{16} (1970), 1--33.
  \MR{0269230 (42 \#4126)}

\end{thebibliography}

\end{document}